\NeedsTeXFormat{LaTeX2e}
[1994/12/01]
\documentclass{amsart}
\usepackage{mathptmx}

\usepackage[english]{babel}
\usepackage[utf8]{inputenc}
\usepackage{textcomp}  
\usepackage[T1]{fontenc}
\usepackage{graphicx}
\usepackage[automark]{scrpage2}
\usepackage[a4paper,left=4cm, right=2cm,top=3cm, bottom=3cm]{geometry} 

\usepackage{color}
\usepackage[pagebackref]{hyperref}

\usepackage{amsthm,amsmath,amssymb,amsfonts}
\usepackage{mathrsfs}

\newtheorem{Theorem}{Theorem}[section]
\newtheorem{Lemma}[Theorem]{Lemma}
\newtheorem{Proposition}[Theorem]{Proposition}
\newtheorem{Corollary}[Theorem]{Corollary}

\theoremstyle{definition} 

\newtheorem{Remark}[Theorem]{Remark}
\newtheorem{Definition}[Theorem]{Definition}

\newcommand{\cal}{\mathcal}
\newcommand{\R}{\mathbb{R}}
\newcommand{\C}{\mathbb{C}}
\newcommand{\N}{\mathbb{N}}
\newcommand{\Z}{\mathbb{Z}}
\newcommand{\hyp}{\R^n \setminus \R^l}

\newcommand{\ph}{\varphi}
\newcommand{\Om}{\Omega}
\newcommand{\eps}{\varepsilon}
\newcommand{\Sc}{{\cal S}}

\DeclareMathOperator{\rloc}{rloc}
\DeclareMathOperator{\rinf}{rinf}
\DeclareMathOperator{\tr}{tr}
\DeclareMathOperator{\Ext}{Ext}
\DeclareMathOperator{\dist}{dist}

\newcommand{\BR}[1][s]{B_{p,q}^{#1}(\R^n)}
\newcommand{\FR}[1][s]{F_{p,q}^{#1}(\R^n)}
\newcommand{\Fpp}[2]{F_{p,p}^{#1}(#2)}

\newcommand{\bspq}{\BR}
\newcommand{\fspq}{\FR}

\newcommand{\fO}[1][\Z^{\Om}]{f_{p,q}^{s}(#1)}

\newcommand{\FO}[1][\Om]{F_{p,q}^s(#1)}

\newcommand{\FtBar}[1][\Om]{\tilde{F\,}\!_{p,q}^s(\bar{#1})}

\newcommand{\Ft}[1][\Om]{\tilde{F\,}\!_{p,q}^s(#1)}

\newcommand{\Frinf}[1][\Om]{F_{p,q}^{s,\rinf}(#1)}

\newcommand{\Frloc}[1][\Om]{F_{p,q}^{s,\rloc}(#1)}

\newcommand{\sint}{\lfloor s\rfloor}
\newcommand{\srest}{\{s\}}




\usepackage{amssymb}





\begin{document}



\title{Wavelet decomposition techniques and Hardy inequalities for function spaces on cellular domains}
\author[Benjamin Scharf]{Benjamin Scharf} 
\email{benjamin.scharf@ma.tum.de} 
\address{Technische Universit\"at\\ M\"unchen, Fakult\"at Mathematik\\ Boltzmannstrasse 3\\ D-85748 Garching\\Germany}

\keywords{Function spaces on cellular domains; Decomposition techniques; Triebel-Lizorkin spaces; Wavelet representations; Hardy inequalities}
\thanks{Benjamin Scharf: Technische Universit\"at M\"unchen, Fakult\"at Mathematik, Boltzmannstrasse 3, D-85748 Garching, Germany, benjamin.scharf@ma.tum.de}

\subjclass[2010]{46E35}
\begin{abstract}
A rather tricky question is the construction of wavelet bases on domains for suitable function spaces (Sobolev, Besov, Triebel-Lizorkin type). In his monograph from 2008, Triebel presented an approach how to construct wavelet (Riesz) bases in function spaces of Besov and Triebel-Lizorkin type on cellular domains, in particular on the cube. However, he had to exclude essential exceptional values of the smoothness parameter $s$, for instance the theorems do not cover the Sobolev space $W_2^1(Q)$ on the $n$-dimensional cube $Q$ for $n$ at least 2. 

Triebel also gave an idea how to deal with those exceptional values for the Triebel-Lizorkin function space scale on the cube $Q$: He suggested to introduce modified function spaces for the critical values, the so-called reinforced spaces. In this paper we start examining these reinforced spaces and transfer the crucial decomposition theorems necessary for establishing a wavelet basis from the non-critical values to analogous results for the critical cases now decomposing the reinforced function spaces of Triebel-Lizorkin type.
\end{abstract}

\maketitle



\section{Introduction}
Nowadays the theory and application of wavelet decompositions plays an important role not only for the study of function spaces (of Lebesgue, Hardy, Sobolev, Besov, Triebel-Lizorkin type) but also for its applications in signal and numerical analysis, partial differential equations and image processing.
  
A rather tricky question is the construction of wavelet bases on domains $\Om \subset \R^n$ for suitable function spaces. The main problem is the handling of the boundary faces of the domain. One starting point are the papers of Ciesielski and Figiel \cite{CF83}, \cite{CF83a} and \cite{Cie84} dealing with spline bases for classical Sobolev and Besov spaces on compact $C^{\infty}$ manifolds. Related approaches and extensions are given in \cite{Dah97}, \cite{DS98}, \cite{Dah01}, \cite{Coh03}, \cite{HS04}, \cite{JK07} and \cite{FG08}. 

A major breakthrough is described in the monograph \cite{Tri08} of Triebel for cellular domains. A cellular domain is a disjoint union of diffeomorphic images of a cube. The most prominent example is the unit cube $Q$ in $\R^n$. Furthermore, all $C^{\infty}$-domains are cellular domains.

On the one hand Triebel constructed wavelet (Riesz) frames, not wavelet bases, for Triebel-Lizorkin spaces $\FO[\Om]$ for $C^{\infty}$ domains $\Om$ with natural exceptional values $s-\frac{1}{p} \in \N_0$ in \cite[Theorem 5.27]{Tri08} for general dimensions $n$ and general smoothness parameter $s$.

On the other hand Triebel constructed wavelet (Riesz) basis for $\FO[\Om]$ where $\Om$ is an $n$-dimensional cellular domain. But he had to exclude the exceptional values $s-\frac{k}{p} \notin \N_0$ for $k\in \{1,\ldots,n\}$, see \cite[Theorem 6.30]{Tri08}. For instance, the most prominent Sobolev space $W_2^1(Q)$ is exceptional and upto now there seems to be no construction of a wavelet basis in Triebel's sense for $W_2^1(Q)$, see the overview given in \cite[Section 5.3.1, Remark 5.50]{Tri08}.

A proposal how to deal with these cases is presented in \cite[Section 6.2.4]{Tri08}. At first, one considers the situation for the unit cube $Q$: The idea is to modify the spaces $\FO[Q]$ and to ``reinforce them'', now named $\Frinf[Q]$: One takes an $f \in \FO[Q]$ and for every critical value $l \in \{0,\ldots,n-1\}$, i.\,e.\ when
\begin{align*}
 s-\frac{n-l}{p} \in \N_0,
\end{align*}
one requires $f$ to fulfil the additional reinforce property $R_l^{r,p}$. Roughly speaking, this reinforce property asks for a certain decay of the derivatives of $f$ at the faces (edges, vertices) of dimension $l \in \{0,\ldots,n-1\}$ of the unit cube. The construction of the reinforced Triebel-Lizorkin function spaces $\Frinf[Q]$ ensures that in the non-critical cases the spaces $\Frinf[Q]$ and $\FO[Q]$ coincide. The main aim of thesis \cite{Sch13} is the construction of wavelet (Riesz) basis for the spaces $\Frinf[Q]$ without any exceptional values. 

In this paper which is an excerpt of Chapter 3 of thesis \cite{Sch13} we give an insight into the first necessary decomposition techniques for incorporating the exceptional values. Instead of dealing with boundary faces of dimension $0$ to $n-1$ for the $n$-dimensional cube $Q$  we consider the model case $\Om=\hyp$, where $\R^l$ stands symbolically for an $l$-dimensional plane in $\R^n$. We say: An $f \in S'(\R^n)$ belongs to $\Frinf[\hyp]$ if, and only if, it belongs to $\fspq$ and furthermore, if
\begin{align*}
 s-\frac{n-l}{p} \in \N_0,
\end{align*}
we ask for an additional decay property at the boundary $\R^l$, see Definition \ref{reinforcedhyp}.

The main aim of the paper is the proof of decomposition techniques for the reinforced Triebel-Lizorkin spaces $\Frinf[\hyp]$ similar to \cite[Section 6.1.4]{Tri08}, in particular (6.68) and Theorem 6.23 of \cite{Tri08}.  In Theorem \ref{Zerleger} for the non-exceptional values resp.\ Theorem \ref{Zerlegercrit} for the exceptional values we show that an element of $\Frinf[\hyp]=\FR$ resp.\ $\Frinf[\hyp] \subsetneq \FR$ belongs to the refined localization space $\Frloc[\hyp]$ if, and only if, all possible traces (boundary values) at $\R^l$ are vanishing. This means that an $f \in \Frinf[\hyp]$ can be approximated in $\Frinf[\hyp]$-norm by sequences of smooth functions vanishing at the plane $\R^l$ if, and only if, all meaningful traces at $\R^l$ are vanishing. 

Later on, the decomposition techniques are crucial for deriving a wavelet decomposition for $\Frinf[\hyp]$ in the same way as Triebel did, see \cite[Theorem 6.30]{Tri08}. One can decompose $f \in \Frinf[\hyp]$ into boundary and interior parts. The interior part belongs to the refined localization space $\Frloc[\hyp]$, which admits interior wavelet decompositions by a wavelet basis completely supported away from $\R^l$, while the boundary parts can be decomposed into boundary wavelets using wavelet-friendly extension operators. This is done (for the non-exceptional cases) in \cite[Theorem 6.28]{Tri08} and is the crucial starting point for the wavelet decomposition on the cube $Q$, see  \cite[Section 6.1.7]{Tri08}. This step from the space decomposition to the wavelet decomposition and the transition from $\hyp$ to the cube $Q$ is not part of this paper but will be published in the future. For now the interested might read Chapter 4 of thesis \cite{Sch13}.

There are two main ingredients for the proof of the decomposition techniques: Firstly, we need an alternative characterization of the refined localization spaces $\Frloc[\Om]$, see Proposition \ref{rlocequi}. We prove that $f \in \FO[\Om]$ belongs to $\Frloc[\Om]$ if, and only if, it fulfills a certain decay at the boundary  of $\Om$. This proposition is a generalization of results by Triebel, see \cite[Corollary 5.15]{Tri01} and also \cite[Theorem 2.18]{Tri08}.

Secondly, to use the vanishing traces at the boundary of $\Om$, we will prove Hardy inequalities for $\R^n$-functions at $l$-dimensional planes. Hardy inequalities and similar results are a widely used tool in function spaces. This goes back to the paper of Hardy \cite{Har20} from 1920. A nice overview on the Hardy inequality and recent results is given in the book \cite{KMP07} by Kufner, Maligranda and Persson. In connection with Besov and Triebel-Lizorkin function spaces Hardy inequalities are closely connected to the theory of envelopes, see Haroske \cite{Har10} and Triebel \cite{Tri01}.
In Lemma \ref{HardyZerleger} we will show: \textit{Let $d(x)$ be the distance of $x$ to the plane $\R^l$. Then there is a constant $c>0$ such that 
\begin{align*}
 \|d^{-s}(\cdot) f|L_p(\R^n)\| \leq c \sum_{\underset{|\alpha|=r}{\alpha \in \N_l^n}} \|d^{-s+r}(\cdot) D^{\alpha}f|L_p(\R^n)\|
\end{align*}
for all $f \in C^r(\R^n)$ with $(D^{\beta}f)(x',0)=0$ for all $x' \in \R^l$ and $\beta \in \N_l^n$ with $|\beta|\leq r-1$.}


\section{Preliminaries}
Let $\R^n$ be the Euclidean $n$-space, $\Z$ be the set of integers, $\N$ be the set of natural numbers, $\N_0=\N \cup \{0\}$ and $\overline{\N}_0=\N_0 \cup \{\infty\}$. By $|x|$ we denote the usual Euclidean norm of $x \in \R^n$, by $\|x|X\|$ the (quasi)-norm of an element $x$ of a (quasi)-Banach space $X$. If $S \subset \R^n$, then we denote the $n$-dimensional Lebesgue measure of $S$ by $|S|$. 

By $\Sc(\R^n)$ we mean the Schwartz space on $\R^n$, by $\Sc'(\R^n)$ its dual. The Fourier transform of $f \in \Sc'(\R^n)$ resp.\ its inverse will be denoted by $\hat{f}$ resp.\ $\check{f}$. The convolution of $f \in \Sc'(\R^n)$ and $\varphi \in \Sc(\R^n)$ will be denoted by $f * \varphi$. With $supp \ f$ we denote the support of a distribution $f \in \Sc'(\R^n)$.

By $L_p(\R^n)$ for $0<p\leq \infty$ we denote the usual quasi-Banach space of $p$-integrable complex-valued functions with respect to the Lebesgue measure $|\cdot|$ with the usual $\sup$-norm modification for $p=\infty$. If $\Om \subset \R^n$ is open, then we denote by $L_p(\Om)$ the Lebesgue $L_p$-space on $\Om$.

Let $k \in \N_0$. By $C^k(\R^n)$ we denote the space of all functions $f: \R^n \rightarrow \C$ which are $k$-times continuously differentiable (continuous, if $k=0$)
and bounded.

Let $s \in \R$, $0<p\leq \infty$ resp.\ $0<p<\infty$ and $0<q\leq \infty$. By $\bspq$ and $\fspq$ we denote the Besov- and Triebel-Lizorkin function spaces on $\R^n$. 

If $X,Y$ are quasi-Banach spaces, then by the notation $X \hookrightarrow Y$ we mean that $X \subset Y$ and that the inclusion map is bounded. In the text we will usually use the term "norm" also if we only have a quasi-Banach space to deal with.

Let $B_r(x_0)=\{x \in \R^n: |x-x_0|<r\}$ be the open ball with centre $x_0$ and radius $r>0$. Furthermore, we shorten $B_r:=B_r(0)$ and $B:=B_1$. 

Throughout the text all unimportant constants will be called $c,c',C$ etc.\ or we will directly write $A \lesssim B$ which means that there is a constant $C>0$ such that $A \leq C \cdot B$. Only if extra clarity is desirable, the dependency of the parameters will be stated explicitly. The concrete value of these constants may vary in different formulas but remains the same within one chain of inequalities. By $A \sim B$ we mean that there are constants $C_1,C_2>0$ such that $ C_1 \cdot B \leq A \leq C_2 \cdot B$.

\subsection{Basic properties of function spaces $\BR$ and $\FR$}
\begin{Definition}
\label{def:sigma}
 Let $s \in \R, 0<p\leq \infty, 0<q\leq \infty$ and $n$ be the dimension of $\R^n$. Then we define
 \begin{align*}
  \sigma_p := n \cdot \left(\frac{1}{p}-1\right)_+ \text{ and } \sigma_{p,q}:= n \cdot \left(\frac{1}{\min(p,q)-1}\right)_+,
 \end{align*}
 where $a_+=\max(a,0)$. 
 Furthermore, if $s\in \mathbb{R}$, then there are uniquely determined $\sint \in \mathbb{Z}$ and $\srest \in (0,1]$ with $s=\sint+\srest$. 
\end{Definition}

\begin{Proposition}[Homogeneity property of $\FR$]
\label{homogen}
 Let $0<p<\infty$, $0<q\leq \infty$ and $s>\sigma_{p,q}$. Then for all $\lambda \in (0,1]$ and $f \in \FR$ with 
 \begin{align*}
  supp\ f \subset B_{\lambda}=\{x \in \R^n: |x|<\lambda\}
 \end{align*}
 it holds
 \begin{align*}
  \|f(\lambda \cdot)|\FR\| \sim \lambda^{s-\frac{n}{p}} \|f|\FR\|
 \end{align*}
\end{Proposition}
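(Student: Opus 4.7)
The plan is to proceed via an atomic decomposition of $\FR$, which under the hypothesis $s>\sigma_{p,q}$ may be chosen without moment conditions on the atoms. This is the standard Frazier--Jawerth--Triebel framework, and the claim reduces to a direct scaling computation at the level of atoms.

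Fix $J\in\N_0$ with $2^{-J-1}<\lambda\leq 2^{-J}$, and set $f_\lambda(x):=f(\lambda x)$, so that $\operatorname{supp} f_\lambda\subset B_1$. Start from an atomic decomposition $f=\sum_{j\in\N_0,\,m\in\Z^n} s_{j,m}\,a_{j,m}$ with $\|f|\FR\|\sim\|s|f^s_{p,q}\|$, where the $a_{j,m}$ are moment-free $K$-atoms supported in cubes of side $2^{-j}$ centred at $2^{-j}m$. Using $\operatorname{supp} f\subset B_\lambda$, one may assume, after a standard partition-of-unity reduction, that $s_{j,m}=0$ for $j<J$ and that the atoms appearing at each scale $j\geq J$ are supported in a fixed enlargement of $B_\lambda$. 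Defining $\tilde{a}_{k,m}(x):=a_{k+J,m}(\lambda x)$ and $\tilde{s}_{k,m}:=s_{k+J,m}$ for $k\in\N_0$, the factor $\lambda 2^{J}\in(\tfrac{1}{2},1]$ is harmless: by the chain rule $|D^\alpha \tilde{a}_{k,m}|\lesssim 2^{k|\alpha|}$ for $|\alpha|\leq K$, and the supports are cubes of side $\sim 2^{-k}$, so the $\tilde{a}_{k,m}$ form moment-free $K$-atoms at scale $k$ and yield an atomic decomposition of $f_\lambda$. A direct computation of the $f^s_{p,q}$-quasi-norm, picking up the weight-shift factor $2^{-Js}\sim\lambda^s$ from the weight $2^{jsq}$ and the factor $\lambda^{-n/p}$ from the change of variable $y=\lambda x$ in the outer $L_p$-integration, gives
\begin{align*}
\|\tilde{s}|f^s_{p,q}\|\;\sim\;\lambda^{s-n/p}\,\|s|f^s_{p,q}\|.
\end{align*}
Taking infima over admissible decompositions shows $\|f_\lambda|\FR\|\lesssim\lambda^{s-n/p}\|f|\FR\|$, and running the same argument with $f$ and $f_\lambda$ interchanged (dilating atoms of $f_\lambda$ back by $\lambda^{-1}$) yields the reverse inequality.

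The principal obstacle is the low-frequency reduction: atoms of the initial decomposition at scales $j<J$ may have supports extending far outside $B_\lambda$, and rewriting them as finite combinations of scale-$\geq J$ atoms supported near $B_\lambda$ requires that no moment (cancellation) conditions be imposed on the atoms. This is exactly the point where $s>\sigma_{p,q}$ is used; once this reduction is carried out, the remaining bookkeeping with scales and changes of variable is routine.
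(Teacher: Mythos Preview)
Your argument is correct and follows the standard route. The paper itself gives no proof here; it merely cites \cite[Theorem~2.11]{Tri08} and \cite[Corollary~5.16]{Tri01}, whose proofs proceed precisely via moment-free atomic (resp.\ quarkonial) representations available for $s>\sigma_{p,q}$, combined with a localization of the decomposition to scales $j\gtrsim J$ afforded by the support condition $\operatorname{supp} f\subset B_\lambda$. Your sketch reproduces this strategy, and you correctly isolate the low-frequency reduction as the one non-routine step and the place where $s>\sigma_{p,q}$ enters.

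One small sharpening you may want to record: the ``partition-of-unity reduction'' is concretely the observation that, for a cutoff $\psi$ with $\psi\equiv 1$ on $B_\lambda$, $\operatorname{supp}\psi\subset B_{2\lambda}$ and $|D^\alpha\psi|\lesssim\lambda^{-|\alpha|}\sim 2^{J|\alpha|}$, each product $\psi\,a_{j,m}$ with $j<J$ is (a bounded multiple of) a single scale-$J$ atom supported near the origin; since at each scale $j<J$ only $O(1)$ atoms meet $B_\lambda$, the resulting scale-$J$ contribution is controlled by the original sequence norm. This is where the absence of moment conditions is indispensable, exactly as you note.
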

\begin{proof}
 This is a reformulation of \cite[Theorem 2.11]{Tri08} going back to \cite[Corollary 5.16]{Tri01}. 
\end{proof}

Let $l \in \N$, $l<n$ and $1\leq j_1 < \ldots< j_l \leq n$. We set
\begin{align*}
x^{j_1,\ldots,j_l}:=(x_1,\ldots,x_{j_1-1},x_{j_1+1},\ldots,,x_{j_l-1},x_{j_l+1},\ldots,x_n) \in \R^{n-l}
\end{align*}
with obvious modifications if $j_1=1$ or $j_l=n$.
Let $ f: \R^n \rightarrow \C$. Then we define 
\begin{align*}
 f^{x^{j_1,\ldots,j_l}}(x_{j_1},\ldots, x_{j_l}):=f(x_1,\ldots,x_{j_1-1},x_{j_1},x_{j_1+1},\ldots,,x_{j_l-1},x_{j_l},x_{j_l+1},\ldots,x_n)
\end{align*}
as a function on $\R^l$ for a fixed $x^{j_1,\ldots,j_l} \in \R^{n-l}$.

\begin{Proposition}
 \label{Fubini}
 Let $n \geq 2$, $l \in \N$ and $l<n$. Let
 \begin{align*}
 0<p < \infty, 0< q \leq \infty \text{ and } s> \sigma_{p,q}.
 \end{align*}
Then $\FR$ has the Fubini property, i.\,e.\ for all $f \in  \FR$ it holds
 \begin{align}
 \label{Fubeq}
  \|f|\FR\| \sim \sum_{1\leq j_1 < \ldots< j_l \leq n}  \Big\| \big\| f^{x^{j_1,\ldots,j_l}} |\FO[\R^{l}] \big\| | L_p(\R^{n-l}) \Big\|  
 \end{align}
\end{Proposition}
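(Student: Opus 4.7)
My plan is to bootstrap from the one-dimensional Fubini property for $\FR$ (the case $l=1$), which is the classical Fubini property for Triebel--Lizorkin spaces and which I treat as available from \cite{Tri08}. For $l=1$ the assertion is exactly this known result, so I may restrict to $l\geq 2$ and iterate.

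First I fix a tuple $1\leq j_1<\ldots<j_l\leq n$ and study the slice $f^{x^{j_1,\ldots,j_l}} \in \FO[\R^l]$. Since the threshold $s>\sigma_{p,q}$ does not depend on the ambient dimension, applying the one-dimensional Fubini property inside $\R^l$ expresses the $\FO[\R^l]$-norm of this slice as a sum over $k \in \{1,\ldots,l\}$ of $L_p(\R^{l-1})$-norms of one-dimensional $\FO[\R]$-norms in the direction $x_{j_k}$. Taking the $L_p(\R^{n-l})$-norm on both sides in the fixed variables and invoking the usual Fubini theorem for Lebesgue spaces (available because $0<p<\infty$), I may combine the $L_p(\R^{l-1})$ and $L_p(\R^{n-l})$ integrations into one $L_p(\R^{n-1})$-integration over all coordinates except $x_{j_k}$.

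Summing over all admissible tuples $(j_1,\ldots,j_l)$ and observing that a given direction $j\in\{1,\ldots,n\}$ occurs as some $j_k$ in exactly $\binom{n-1}{l-1}$ of these tuples, the right-hand side of \eqref{Fubeq} becomes equivalent to
\begin{align*}
 \binom{n-1}{l-1}\sum_{j=1}^n \Big\| \big\|f^{x^j}|\FO[\R]\big\| \big| L_p(\R^{n-1}) \Big\|.
\end{align*}
By the one-dimensional Fubini property in ambient dimension $n$, this is in turn equivalent to $\|f|\FR\|$, which closes the chain.

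The substantive obstacle here is really the classical one-dimensional Fubini property itself, which Triebel establishes through non-trivial atomic/local-means arguments combined with Fefferman--Stein vector-valued maximal inequalities. Once that deep ingredient is available in the two ambient dimensions $l$ and $n$ that I need, the remaining work consists only of a Fubini swap between Lebesgue norms and a simple combinatorial count, both entirely routine.
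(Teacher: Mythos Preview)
Your proposal is correct and follows essentially the same approach as the paper: both derive the $l$-dimensional Fubini property by iterating the known $1$-dimensional Fubini property (the paper cites \cite[Theorem 4.4]{Tri01} rather than \cite{Tri08}, but this is the same classical result). Your write-up simply makes explicit the Lebesgue--Fubini swap and the combinatorial count $\binom{n-1}{l-1}$ that the paper leaves implicit in the phrase ``an application of the $1$-dimensional Fubini property''.
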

\begin{proof}
 The proof is an application of the $1$-dimensional Fubini property for $\FR$, see \cite[Theorem 4.4]{Tri01}
\end{proof}

\subsection{Function spaces on domains}
\begin{Definition}
\label{defdom}
Let $\Om$ be a domain, i.\,e.\ non-empty open set, in $\R^n$, $\Gamma=\partial \Om$ its boundary and $\overline{\Om}$ its closure. By $D(\Om)$ we denote the set of all functions $f \in D(\R^n)$ with support inside $\Om$ and by $D'(\Om)$ its usual topological dual space. 

Denote by $g|\Om \in D'(\Om)$ the restriction of $g$ to $\Om$, hence $ (g|\Om)(\ph)=g(\ph) \text{ for } \ph \in D(\Om)$. We introduce 
\begin{align*}
 \FO:=\{f \in D'(\Om): f&=g|\Om \text{ for some } g \in \FR\}, \\
     \|f|\FO\|&=\inf \|g|\FR\|,
\end{align*}
where the infimum is taken over all $g\in \FR$ with $g|\Om=f$. Moreover, let
\begin{align*}
\FtBar:=\{f \in \FR: supp \, f \in \overline{\Om} \} 
\end{align*}
with the quasi-norm from $\FR$. Then 
\begin{align*}
\Ft:=\{f \in D'(\Om): f&=g|\Om \text{ for some } g \in \FtBar\}, \\
\|f|\Ft\|&=\inf \|g|\FtBar\|,
\end{align*}
where the infimum is taken over all $g\in \FtBar$ with $g|\Omega=f$. 
\end{Definition}

\begin{Definition}
\label{FrlocDef} We now introduce the refined localization spaces $\Frloc$.
We start with a Whitney decomposition of $\Om$ in the same way as in \cite[Section 2.1.2]{Tri08}. For more details see Stein \cite[Theorem 3, p.\ 16]{Ste70}. Let
\begin{align*}
 Q^0_{l,r} \subset Q^1_{l,r}, \quad l \in \N_0, r =1,\ldots,M_j \text{ with } M_j \in \overline{\N}_0
\end{align*}
be concentric (open) cubes in $\R^n$, sides parallel to the axes of coordinates, centred at $2^{-l}m^r$ for an $m^r \in \Z^n$. The side length of $Q^0_{l,r}$ shall be $2^{-l}$, the side length of $Q^1_{l,r}$ shall be $2^{-l+1}$. We call this collection of cubes a Whitney decomposition of $\Om$ if the cubes $Q^0_{l,r}$ are pairwise disjoint, if
\begin{align*}
 \Om=\bigcup_{l,r} \overline{Q}_{l,r}^{\,0}, \quad \dist(Q^1_{0,r},\Gamma) \gtrsim 1 \quad \text{and} \quad \dist(Q^1_{l,r},\Gamma) \sim 2^{-l} \text{ for } l \in \N . 
\end{align*}
By the construction in \cite[Theorem 3, p.\ 16]{Ste70} one can furthermore assume that for adjacent cubes $Q^0_{l,r}$ and $Q^0_{l',r'}$ it holds $|l-l'|\leq 1$.  

Let $\varrho=\{\varrho_{j,r}\}$ be a suitable resolution of unity for Whitney cubes, i.\,e.\
\begin{align}
\label{resun}
 supp \ \varrho_{j,r} \subset Q_{j,r}^1, \quad \|D^{\alpha} \varrho_{j,r} (x)\| \leq c_{\alpha} 2^{j|\alpha|}, \: x \in \Om, \alpha \in \N_0^n
\end{align}
for some $c_{\alpha}>0$ independent of $x,j,r$ and 
\begin{align*}
 \sum_{j=0}^{\infty} \sum_{r=1}^{M_j} \varrho_{j,r}(x)=1 \text{ if } x \in \Om.
\end{align*}
Let $0\leq p < \infty$, $0<q\leq\infty$ and $s>\sigma_{p,q}$. Then
\begin{align*}
 \Frloc:=\left\{ f \in D'(\Om): \|f| \Frloc\|_{\varrho}< \infty\right\} 
\end{align*}
with
\begin{align*}
 \|f| \Frloc\|_{\varrho}:=\left(\sum_{j=0}^{\infty}\sum_{r=1}^{M_j}\| \varrho_{j,r} f|\FR\|^p\right)^{\frac{1}{p}}. 
\end{align*}
\end{Definition}
\begin{Remark}
\label{rlocDense}
 The definition of $\Frloc$ is independent of the choice of the re\-so\-lution of unity $\varrho$. The space $D(\Om)$ is dense in $\Frloc$ if $0<p,q<\infty$. This follows by the density of $D(\R^n)$ in $\fspq$ and pointwise multiplier arguments.
\end{Remark}

\begin{Remark}
In \cite[Section 2]{Tri08} Triebel introduced (interior) $u$-wavelet systems for $\Om$, $u$-wavelet bases, $u$-Riesz bases  and interior sequence spaces $\fO$ on domains $\Om$. The main result of this section is the wavelet decomposition in \cite[Theorem 2.38]{Tri08} of $\Frloc$:
\end{Remark}

\begin{Remark}
\label{Rem:Ethick}
In \cite[Proposition 3.10]{Tri08} Triebel showed that for the class of bounded $E$-thick domains it holds
\begin{align*}
\Ft=\Frloc.
\end{align*}
This is not valid for $\Om=\hyp$. Actually, we have for $s>0$
\begin{align*}
 \Ft[\hyp] \cong \FR \text{ since } \overline{\Om}=\R^n \text{ and } \Ft[\partial \Om]=\{0\}.
\end{align*}
\end{Remark}

\begin{Theorem}[Wavelet basis for $\Frloc$]
\label{rlocwavelet}
Let $\Om$ be an arbitrary domain in $\R^n$ with $\Om\neq \R^n$. Let 
\begin{align*}
 0<p<\infty, 0<q<\infty, s>\sigma_{p,q} \text{ and } u>s. 
\end{align*}
and furthermore let $v>0$ such that
\begin{align*}
\max(1,p)<v < \infty, \quad s-\frac{n}{p}>-\frac{n}{v}.
\end{align*}
Then there is an orthonormal u-wavelet basis 
\begin{align*}
\Phi=\left\{\Phi_r^j: j \in \mathbb{N}_0, r=1, \ldots, N_j\right\} \subset C^u (\Om)
\end{align*}
in $L_2(\Om)$ according to \cite[Definition 2.31]{Tri08} such that the following holds:
An element $f \in L_v(\Om)$ belongs to $\Frloc$ if, and only if, $f$ can be represented as
\begin{align}
 \label{represent}
 f=\sum_{j=0}^{\infty}\sum_{r=1}^{N_j} \lambda_r^j(f) 2^{-\frac{jn}{2}}\Phi_r^j, \quad \lambda \in \fO.
 \end{align}
The representation \eqref{represent} is unique and it holds
\begin{align*}
 \lambda_r^j(f)= 2^{jn/2} (f,\Phi_r^j), \quad 
\|f|\Frloc\| \sim \| \lambda(f)|\fO\|.
\end{align*}
\end{Theorem}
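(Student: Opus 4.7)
This result is essentially \cite[Theorem 2.38]{Tri08}, and my plan is to invoke it, so what follows is a sketch of the three ingredients one would assemble to prove it from scratch. The input data are the Whitney decomposition from Definition \ref{FrlocDef}, a classical orthonormal wavelet basis of $L_2(\R^n)$ of smoothness $C^u$ (Daubechies type), and the usual wavelet characterization of $\FR$ on $\R^n$ via the sequence space $\Fpp{s}{\R^n}$-analogue.

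First, the construction of $\Phi$. On each Whitney cube $Q^0_{j,r}$ I would produce a local orthonormal Daubechies-type wavelet system of smoothness $C^u$, rescaled to the scale $2^{-j}$ of the cube and truncated or adapted at the interfaces between neighbouring cubes as in \cite[Sections 2.4--2.5]{Tri08}. The assembled collection $\Phi = \{\Phi_r^j\}_{j,r} \subset C^u(\Om)$ is an orthonormal basis of $L_2(\Om)$ and a $u$-wavelet system in the sense of \cite[Definition 2.31]{Tri08}. The hypothesis $u>s$ ensures enough regularity and vanishing moments for $\Phi$ to locally characterise $\FR$.

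Second, the characterization of $\Frloc$ by the coefficients. Given $f \in \Frloc$, I would use the resolution of unity $\{\varrho_{j,r}\}$ from Definition \ref{FrlocDef} to decompose $f = \sum_{j,r} \varrho_{j,r} f$ with $\varrho_{j,r} f \in \FR$. Expanding each piece in the ordinary $\R^n$-wavelet basis yields the coefficients $\lambda_r^j(f) = 2^{jn/2}(f,\Phi_r^j)$, and the $\FR$-wavelet characterization applied piece by piece gives the pointwise norm equivalence on each Whitney cube. Summing in $\ell^p$ over $(j,r)$, and invoking the bounded overlap of the supports of the $\varrho_{j,r}$ together with pointwise multiplier bounds in $\FR$, yields $\|\lambda(f)|\fO\| \sim \|f|\Frloc\|_\varrho$; the reverse implication follows by reassembling $f = \sum_{j,r} \lambda_r^j(f)\, 2^{-jn/2} \Phi_r^j$ from a sequence in $\fO$ and estimating each local piece. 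Orthonormality in $L_2(\Om)$ gives uniqueness.

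Third, the $L_v(\Om)$-consistency. The hypotheses $\max(1,p)<v<\infty$ and $s - n/p > -n/v$ provide the strict Sobolev-type embedding $\FR \hookrightarrow L_v(\R^n)$, which upgrades to $\Frloc \hookrightarrow L_v(\Om)$ by summing the Whitney pieces (note $p<v$, so $\ell^p \subset \ell^v$). This ensures that any $f \in \Frloc$ lies in $L_v(\Om)$, and conversely that the pairings $(f,\Phi_r^j)$ make sense for any $f \in L_v(\Om)$ since each $\Phi_r^j$ is compactly supported in $\Om$ and of class $C^u$.

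The main obstacle is the matching in the second step: one must interchange the $\ell^p$-sum over Whitney cubes (from the definition of $\Frloc$) with the internal wavelet sequence norm inside each $\FR$-piece, and this requires the precise pointwise multiplier bounds for $\varrho_{j,r}$ on $\FR$ together with the uniform geometry encoded in $\dist(Q^1_{j,r},\Gamma) \sim 2^{-j}$. This is exactly the technical core of \cite[Sections 2.3--2.5]{Tri08}.
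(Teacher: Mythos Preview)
Your approach is correct and matches the paper's: the paper does not prove this theorem but simply cites it as \cite[Theorem 2.38]{Tri08}, which is exactly what you do. Your additional sketch of the three ingredients (construction of $\Phi$ via Whitney cubes, local wavelet characterization glued via the resolution of unity, and the $L_v$-embedding for consistency of the pairings) goes beyond what the paper provides and is a reasonable outline of Triebel's argument.
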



\section{Decomposition theorems for function spaces on domains}
\label{Zerlegungsth2}
\subsection{Basic notation}
\label{Zerlegungsth} Let $n \in \N$ with $l<n$. Let $\R^n=\R^{l} \times \R^{n-l}$ and $x=(y,z) \in \R^n$,
\begin{align*}
 y=(y_1,\ldots,y_l) \in \R^l, z=(z_1,\ldots,z_{n-l}) \in \R^{n-l}.
\end{align*}
We identify $\R^l$ with the plane $\{z=0\} \subset \R^n$. Hence, in our understanding 
\begin{align*}
\hyp=\left\{x=(y,z) \in \R^n: z \neq 0\right\}.   
\end{align*}
Furthermore, let 
\begin{align*}
 Q_l = \{ x=(y,z) \in \R^n: z=0, 0<y_m<1, m=1, \ldots, l\} \subset  \R^l
\end{align*}
be the unit cube in this plane and let
\begin{align*}
 Q_l^n = \{ x=(y,z) \in \R^n: (y,0) \in Q_l, z \in \R^{n-l}\}.	
\end{align*}
Let
\begin{align*}
 \N_l^n=\left\{ \alpha=(\alpha_1,\ldots,\alpha_n) \in \N_0^n: \alpha_1=\ldots=\alpha_l=0\right\}.
\end{align*}
Then by $D^{\alpha}f$ with $\alpha \in \N_l^n$
we denote the derivatives perpendicular to $\R^l$.

\subsection{Reinforced spaces for $\R^n \setminus \R^l$}
\label{Reinf}
In \cite[Section 6.1.4]{Tri08} Triebel showed the following crucial property which paved the way to the wavelet characterization for the cube $Q$:
\begin{Proposition}[Triebel]
\label{decomptri}
 Let $l \in \N$ and $l<n$. Let
\begin{align*}
1\leq p < \infty, 0<q<\infty, 0<s-\frac{n-l}{p} \notin \N \text{ and } 
 r=\lfloor s-\frac{n-l}{p}\rfloor
\end{align*}
Then $D(Q_l^n \setminus Q_l)$ is dense in 
\begin{align*}
 \left\{f \in \Ft[Q_l^n]: \tr_l^r f=0 \right\}.
\end{align*}
\end{Proposition}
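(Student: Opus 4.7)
The plan is to approximate any $f \in \Ft[Q_l^n]$ with $\tr_l^r f = 0$ by smooth functions in $D(Q_l^n \setminus Q_l)$ in two stages: first a normal-direction cutoff that removes $f$ in a shrinking $z$-neighborhood of $Q_l$, then an ordinary mollification inside $Q_l^n \setminus Q_l$. For the cutoff I fix $\psi \in C^\infty([0,\infty))$ with $\psi \equiv 0$ on $[0,1]$ and $\psi \equiv 1$ on $[2,\infty)$, and set $f_\delta(y,z) := \psi(|z|/\delta) f(y,z)$. Since $\psi(|z|/\delta)$ is a smooth bounded pointwise multiplier on $\Ft$, each $f_\delta$ lies in $\Ft[Q_l^n]$ and vanishes for $|z| < \delta$; an interior mollification then yields functions in $D(Q_l^n \setminus Q_l)$, so it suffices to prove $g_\delta := (1 - \psi(|z|/\delta))f \to 0$ in $\Ft[Q_l^n]$ as $\delta \to 0$.

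This convergence is the heart of the proof, and it is where Lemma \ref{HardyZerleger} enters decisively. Heuristically, writing $\omega_\delta(z) = 1 - \psi(|z|/\delta)$, each derivative $D^\gamma g_\delta$ with $|\gamma| \leq s$ splits by Leibniz into terms $(D^\beta \omega_\delta)(D^{\gamma-\beta} f)$, where $\beta$ ranges over purely normal multi-indices with $|D^\beta \omega_\delta| \lesssim \delta^{-|\beta|}$ and support in $\{|z| \leq 2\delta\}$. The vanishing-trace assumption $\tr_l^r f = 0$, combined with Taylor expansion in $z$, yields $|D^{\gamma-\beta} f(x)| \lesssim |z|^{r+1-k}$ on this strip for a suitable $k$ depending on the normal component of $\gamma - \beta$; integrating over a set of $(n-l)$-dimensional measure $\sim \delta^{n-l}$ in the normal variable then produces a net positive power of $\delta$ precisely because $s - (n-l)/p \notin \N$ forces $s < r+1 + (n-l)/p$. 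The Hardy inequality of Lemma \ref{HardyZerleger} is the rigorous $L_p$-level replacement for this heuristic: it converts negative powers of $d(x) = |z|$ into integer derivatives $D^\alpha f$ with $\alpha \in \N_l^n$, each absorbed by one order of the vanishing trace.

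The main obstacle I expect is handling non-integer $s$ in the $\Ft$-norm: one cannot reduce $\|g_\delta|\Ft\|$ to finitely many pointwise derivative bounds. The fix is to work with an equivalent characterization — for instance an atomic or difference-based norm for $\Ft[Q_l^n]$ — and to show that $\omega_\delta$ acts as a uniform pointwise multiplier on appropriately rescaled atoms, producing the factor identified above. A second, milder complication is that the original $f$ is only a distribution, so a preliminary convolutional mollification is required to make the Taylor/Hardy arguments literal; any small trace mismatch introduced by this step can be cancelled by subtracting a smooth correction with matching boundary data. Once these technicalities are cleared, the three scales — fractional smoothness $s$, cutoff width $\delta$, and trace vanishing to order $r+1$ — balance precisely because of the positive gap $0 < s - (n-l)/p - r < 1$ ensured by non-criticality.
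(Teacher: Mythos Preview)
The paper does not give its own proof of Proposition~\ref{decomptri}; it is quoted as Triebel's result from \cite[Section~6.1.4]{Tri08}. What the paper \emph{does} prove is the reformulation Theorem~\ref{Zerleger}, which replaces $Q_l^n$ by $\R^n$ and the density statement by the identity $\Frloc[\hyp] = \{f \in \FR: \tr_l^r f = 0\}$ (equivalent in spirit, since $D(\hyp)$ is dense in $\Frloc[\hyp]$ by Remark~\ref{rlocDense}).

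The paper's route to Theorem~\ref{Zerleger} is entirely different from yours. Rather than a normal cutoff $\psi(|z|/\delta)$ and a direct estimate of $\|g_\delta|\Ft\|$, the paper (i) characterizes $\Frloc$ intrinsically via Proposition~\ref{rlocequi} as $\{f \in \FO: \delta^{-s}f \in L_p\}$, so the whole task becomes a weighted $L_p$ estimate; (ii) uses the Fubini property (Proposition~\ref{Fubini}) to freeze $x' \in \R^l$ and work in $\R^{n-l}$; (iii) passes to spherical coordinates and the trace theorem onto lines to reduce to a one-dimensional problem in $F_{p,p}^{s'}(\R^+)$; and (iv) quotes the known one-dimensional $C^\infty$-domain result, Proposition~\ref{inftydom}. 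Note that Lemma~\ref{HardyZerleger} is \emph{not} used in the non-critical case at all; it enters only in Theorem~\ref{Zerlegercrit}.

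Your outline is closer to Triebel's original argument than to the paper's, but it has a genuine gap at the decisive step: you never say how to control $\|g_\delta|\Ft\|$ for non-integer $s$. The Leibniz/Taylor heuristic you write down works only in an integer-order Sobolev norm, and the sentence ``work with an atomic or difference-based norm'' is exactly where all the content lives --- nothing concrete is supplied. Also, Lemma~\ref{HardyZerleger} gives a weighted $L_p$ bound, not a $\Ft$-norm bound; in the paper it is useful only because it is paired with Proposition~\ref{rlocequi}, which converts between the two, and even then only in the critical case. If you want to make the cutoff route rigorous, the standard mechanism is to cover $\{|z|<2\delta\}$ by cubes of side $\sim\delta$, apply the homogeneity property (Proposition~\ref{homogen}) on each, and use the vanishing traces via a Taylor remainder on the rescaled cubes; without that ingredient spelled out, the argument is incomplete.
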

Here $\tr_l^r f$ is the trace operator onto $Q_l$. However, when $s-\frac{n-l}{p} \in \N_0$, Proposition \ref{decomptri} cannot be proven in this way and should not be true in general. As suggested in \cite[Section 6.2.3]{Tri08} we have to ``reinforce'' the function spaces $\FR$. To simplify notation in the upcoming substitute of Proposition \ref{decomptri} we replace $Q_l$ by $\R^n \setminus \R^l$ and $Q_l^n$ by $\R^n$. The basic observations remain the same.
 
\subsubsection{Hardy inequalities at $l$-dimensional planes}
We start with some basic observations regarding Hardy inequalities at planes $\R^l$ for function spaces on $\R^n$. The main observation of this section is the difference of the behaviour at $\R^l$ of $f \in \FR$ for $0<s<\frac{n-l}{p}$ - the non-critical cases - in comparison to the behaviour for $s=\frac{n-l}{p}$ - the critical cases.
\begin{Definition}
Let
\begin{align*}
 d(x):=\dist(x,\partial \Om)= \inf\{|x-y|: y \in \partial \Om \} \text{ and } 
 \Om_{\eps}:=\left\{x \in \Om: d(x)<\eps\right\}
\end{align*}
\end{Definition}
Now we take a look at $\Om=\R^n\setminus \R^l$ using the notation $x=(x',x'') \in \R^n=\R^l \times \R^{n-l}$. Then in our special situation we have $d(x)=|x''|$.

\begin{Proposition}[Sharp Hardy inequalities - the critical case]
 \label{Hardycrit}
 Let $0<\eps<1$, $1<p<\infty$ and $0<q \leq \infty$. Let $\varkappa$ be a positive monotonically decreasing function on $(0,\eps)$. Then
\begin{align*}
 \int_{(\hyp)_{\eps}} \left|\frac{\varkappa(d(x))f(x)}{\log d(x) } \right|^p \frac{ dx}{d^{n-l}(x)} \leq c \left\|f| \FR[\frac{n-l}{p}] \right\|^p
\end{align*}
for some $c>0$ and all $f \in \FR[\frac{n-l}{p}]$ if and only if $\varkappa$ is bounded.
\end{Proposition}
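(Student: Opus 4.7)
The proof splits into the two implications. For the sufficient direction, assume $\varkappa \le M$ on $(0,\eps)$. Pulling $M^p$ out of the integral, it suffices to prove the unweighted critical Hardy inequality
\begin{align*}
\int_{(\R^n \setminus \R^l)_\eps} \left| \frac{f(x)}{\log d(x)}\right|^p \frac{dx}{d^{n-l}(x)} \lesssim \|f|F_{p,q}^{(n-l)/p}(\R^n)\|^p.
\end{align*}
My plan is to apply the Fubini property (Proposition~\ref{Fubini}) with its index set chosen as the $n-l$ perpendicular coordinates $z$, giving
\begin{align*}
\|f|F_{p,q}^{(n-l)/p}(\R^n)\|^p \gtrsim \int_{\R^l} \|f(y,\cdot)|F_{p,q}^{(n-l)/p}(\R^{n-l})\|^p\, dy.
\end{align*}
Since $d(x)=|z|$ the Hardy integrand depends only on $z$ for fixed $y \in \R^l$, so the problem reduces to the one-point critical Hardy inequality on $\R^{n-l}$,
\begin{align*}
\int_{0<|z|<\eps} \left|\frac{g(z)}{\log|z|}\right|^p \frac{dz}{|z|^{n-l}} \lesssim \|g|F_{p,q}^{(n-l)/p}(\R^{n-l})\|^p,
\end{align*}
which is the classical sharp critical Hardy inequality available from the envelope theory of Triebel and Haroske (\cite{Tri01}, \cite{Har10}).

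For the necessary direction, assume the inequality holds and suppose $\varkappa$ is unbounded; by monotonicity $\varkappa(t) \to \infty$ as $t \to 0^+$. The strategy is to exhibit test functions showing that no single constant $c$ can absorb $\varkappa$. A natural ansatz is the tensor product $f_\tau(y,z)=\phi(y)g_\tau(z)$ with $\phi \in C_c^\infty(\R^l)$ fixed, and $g_\tau$ an extremal function for the one-point critical Hardy inequality on $\R^{n-l}$, of logarithmic-tower form such as $g_\tau(z) = \eta(z)\log(|\log|z||)^{\beta}$ for a suitable $\beta$ (depending on $q$) and truncated at a scale $\tau \downarrow 0$. Such $g_\tau$ have controlled $F_{p,q}^{(n-l)/p}(\R^{n-l})$ norm while $|g_\tau(z)/\log|z||$ saturates the one-point Hardy integral; inserting the unbounded factor $\varkappa^p$ along the sequence $\tau \to 0$ then forces the left-hand side to diverge, contradicting the assumed constant.

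The sufficiency reduces cleanly, via Fubini, to a known one-point critical Hardy inequality and should cause no difficulty once the latter is cited. The main obstacle will be the necessity, specifically the rigorous construction and norm analysis of the extremal family: the extremals lie on the boundary of the critical space $F_{p,q}^{(n-l)/p}(\R^{n-l})$ and must be balanced against the monotone decreasing weight $\varkappa$ and the logarithmic denominator $\log d(x)$. I expect this to proceed through atomic or wavelet decompositions at dyadic scales $2^{-j}$ near $\R^l$, together with Abel-type summation exploiting the monotonicity of $\varkappa$ to derive the blow-up.
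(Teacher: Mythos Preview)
Your sufficiency argument matches the paper exactly: reduce via the Fubini property to the known $(n-l)$-dimensional critical Hardy inequality from \cite{Tri01} and absorb the bounded weight $\varkappa$.

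For the necessity, your general strategy is right, but the concrete ansatz is off. A function of double-logarithmic type $\eta(z)\bigl(\log|\log|z||\bigr)^{\beta}$ is far too small to saturate the critical inequality; the growth envelope of $F_{p,q}^{(n-l)/p}$ at a point is of order $|\log t|^{1/p'}$, not $\log\log$, so your proposed $g_\tau$ would give a Hardy integral tending to zero rather than producing a lower bound proportional to $\varkappa$. The tensor-product reduction is also an unnecessary detour: once you tensor $\phi(y)g_\tau(z)$ you must still control $\|\phi\otimes g_\tau|F_{p,q}^{(n-l)/p}(\R^n)\|$, which is not a one-line consequence of Fubini.

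The paper instead carries out directly the dyadic atomic construction you anticipate in your closing remark. It places normalized bump atoms at lattice points $x^{j,k}$ in the shells $\{|x'|<1,\ |x''|\sim 2^{-j}\}$ for $j=1,\dots,J$ and sums them with weight $J^{-1/p}$, obtaining a function $f_J$ with $f_J\equiv J^{1/p'}$ on $\{|x'|<1,\ |x''|<2^{-J}\}$ and $\|f_J|F_{p,q}^{(n-l)/p}(\R^n)\|\lesssim 1$ by the atomic representation theorem. Plugging $f_J$ into the Hardy integral over this set then yields a lower bound $\gtrsim \varkappa(2^{-J})^p$, forcing $\varkappa$ to be bounded. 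This is the $l$-plane generalization of \cite[Section 16.6]{Tri01}; no Abel summation or separate analysis of $\varkappa$'s monotonicity is needed beyond the single evaluation $\varkappa(d(x))\geq\varkappa(2^{-J})$. So: drop the double-log extremal and the tensor factorization, and go straight to the atomic construction you already identified as the likely mechanism.
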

\begin{proof}
 The proof is a generalization of the discussion in \cite[Section 16.6]{Tri01}. There the case $l=n-1$ is considered. One uses the one-dimensional version of the Hardy inequality (16.8) in \cite{Tri01}.
 
For the ``if-part'' let at first $1<q \leq \infty$. We now use the $(n-l)$-dimensional version of (16.8) in \cite{Tri01}. Let $x=(x',x'') \in \R^n=\R^l \times \R^{n-l}$. We fix $x' \in \R^l$ and get
\begin{align*}
 \int_{|x''|<\eps} \left|\frac{\varkappa(|x''|)|f(x',x'')}{\log |x''|} \right|^p \frac{ dx''}{|x''|^{n-l}} \lesssim \left\|f| F_{p,q}^{\frac{n-l}{p}}(\R^{n-l}) \right\|^p.
\end{align*}
We now integrate over $x' \in \R^l$ and make use of the Fubini property of $\FR$, see Proposition \ref{Fubini}. Using $d(x)=|x''|$ this shows
\begin{align}
\begin{split}
 \label{Hardyl}
 \int_{(\hyp)_{\eps}} \left|\frac{\varkappa(d(x))f(x)}{\log d(x) } \right|^p \frac{ dx}{d^{n-l}(x)} &\lesssim \left\|\big\|f| F_{p,q}^{\frac{n-l}{p}}(\R^{n-l}) \big\||L_p(\R^{l})\right\| \\
& \lesssim \|f | \FR\|.
\end{split}
\end{align}
Since for fixed $p$ with $1<p<\infty$ the spaces $F_{p,q}^{\frac{n-l}{p}}(\R^{n-l})$ are monotonic with respect to $q$, inequality \eqref{Hardyl} holds for all $1<p<\infty$ and $0<q \leq \infty$.

For the ``only if-part'' we have to show that $\varkappa$ must be bounded. The proof is a generalization of the discussion in \cite[Section 16.6]{Tri01} for dimension $l=n-1$. We consider the set
\begin{align*}
 S_J^l=\{x=(x',x'') \in \R^{l} \times \R^{n-l}: |x'|<1, |x''|<2^{-J} \}, \ J \in \N
\end{align*}
and $S_J^{l,*}=S_J^l \setminus S_{J+1}^l$. We will construct an $(n-l)$-dimensional substitute of $f_J$ from (16.29) in \cite{Tri01}. We want to have $f_J \in F_{p,q}^{n-l}(\R^n)$, 
\begin{align}
\label{fJ}
 f_J(x) = J^{\frac{1}{p'}} \text{ for } x \in S_J^l \text{ and } \|f_J|\FR[\frac{n-l}{p}]\| \lesssim 1.
\end{align}
If such a sequence of $f_J$'s exist, we get
\begin{align*}
 \int_{(\hyp)_{\eps}} \left|\frac{\varkappa(d(x))f_J(x)}{\log d(x) } \right|^p \frac{ dx}{d^{n-l}(x)} &\gtrsim  \varkappa(2^{-J})^p J^{1-p}  \int_{S_J^{l,*}} \left|\frac{1}{\log d(x) } \right|^p \frac{ dx}{d^{n-l}(x)} \\
&\sim\varkappa(2^{-J})^p J^{1-p}\int_{2^{-J-1}}^{2^{-J}} \hspace{-0.15cm} r^{n-l-1} \frac{1}{r^{n-l}|\log r|^p} \ dr \\
&\gtrsim \varkappa(2^{-J})^p
\end{align*}
using $(n-l)$-dimensional spherical coordinates and $p>1$. Since the constants do not depend on $J \in \N$, this shows $\varkappa \lesssim 1$ keeping in mind $\|f_J|\FR[\frac{n-l}{p}]\| \lesssim 1$. We can define a series of $f_J$'s in the following way: For every $j \in \N$ we choose lattice points $x^{j,k} \in S_j^{l,*}$ for $k \in \{1,\ldots, C_j\}$ such that
\begin{align}
\label{overlap2}
 S_j^{l,*} \subset \bigcup_{k=1}^{C_j} B_{2^{-j}}(x^{j,k})
\end{align}
and $|x^{j,k}-x^{j,k'}| \geq 2^{-j}$ for $k \neq k'$. By a simple volume argument we have 
\begin{align}
\label{overlap1}
C_j \sim \frac{|S_j^{l,*}|}{2^{-jn}} \sim 2^{jl}.
\end{align}
Let $\psi \in \Sc(\R^n)$ be non-negative, $\psi(x)=1$ for $|x|\leq\frac{1}{2}$ and $\psi(x)=0$ for all $|x|\geq 1$. We set 
\begin{align*}
 f_J(x):=J^{-\frac{1}{p}} \sum_{j=1}^{J} \sum_{k=1}^{C_j}2^{-j \frac{l}{p}} \left[2^{j \frac{l}{p}} \psi(2^{j-1}(x-x^{j,k}))\right].
\end{align*}
At least when $q\geq 1$ and no moment conditions are necessary the functions 
\begin{align*}
\left[2^{j \frac{l}{p}} \psi(2^{j-1}(x-x^{j,k}))\right]
\end{align*}
are correctly normalized atoms in $\FR[\frac{n-l}{p}]$, see  \cite[Definition 3.1]{Sch13a}. Furthermore, by the support properties we can use the arguments in \cite[Section 2.15]{Tri01} using a modification of the sequence space. The slight overlapping of the functions $\psi(2^{j-1}(x-x^{j,k}))$ for different $j$ can be neglected. Hence by the atomic representation Theorem, see  \cite[Theorem 3.12]{Sch13a}, and $\eqref{overlap1}$ we have
\begin{align*}
 \|f_J|\FR[\frac{n-l}{p}]\| \lesssim J^{-\frac{1}{p}} \left(\sum_{j=1}^{J}\sum_{k=1}^{2^{jl}} \left(2^{-j \frac{l}{p}}\right)^p \right)^{\frac{1}{p}} \sim 1.
\end{align*}
On the other hand using $\eqref{overlap2}$ and the support properties of $\psi$ we get
\begin{align}
\label{overlap3}
 f_J(x) \geq J^{-\frac{1}{p}} \sum_{j=1}^J 1 = J^{\frac{1}{p'}} \text{ for } x \in S_J^l.
\end{align}
For $0<q<1$ one has to modify the functions $f_J$ to get moment conditions. These modifications are described in Step 5 of the proof of Theorem 13.2 in \cite{Tri01}. Then one has to define \eqref{overlap2} such that the functions $\psi(2^{j-1}(x-x^{j,k}))$ have disjoint support for fixed $j$ and different $k$. Then they cannot satisfy \eqref{overlap3}. But this is not necessary - it suffices to have $f_J(x)\geq J^{\frac{1}{p'}}$ on a set $A_J^l \subset S_J^l$ with $|A_J^l| \sim |S_J^l|$. This is possible.
\end{proof}

\begin{Proposition}[Sharp Hardy inequalities - the subcritical case]
 \label{Hardysubcrit}
Let $0<\eps<1$, $1 \leq p<\infty$ and $0<q \leq \infty$. Let $ 0<s<\frac{n-l}{p}$
and $\varkappa$ be a positive monotonically decreasing function on $(0,\eps)$. Then
\begin{align*}
 \int_{(\hyp)_{\eps}} \left|\varkappa(d(x))f(x) \right|^p \frac{ dx}{d^{sp}(x)} \leq c \left\|f| \FR \right\|^p
\end{align*}
for some $c>0$ and all $f \in \FR$ if and only if $\varkappa$ is bounded.
\end{Proposition}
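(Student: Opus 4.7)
The plan is to mirror the proof of Proposition \ref{Hardycrit}, with the simplification that in the subcritical range $0<s<\frac{n-l}{p}$ no logarithmic factor is needed. The ``if''-part reduces via the Fubini property to a classical one-point Hardy inequality on $\R^{n-l}$, while the ``only if''-part tests the hypothesis against a family of atoms concentrated in thin shells near $\R^l$.

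\emph{Sufficiency.} Since the left-hand side is monotone in $\varkappa$, after replacing $\varkappa$ by $\sup\varkappa$ it suffices to treat $\varkappa\equiv 1$. Writing $x=(x',x'')\in\R^l\times\R^{n-l}$ and fixing $x'$, the classical sharp one-point Hardy inequality at the origin in $\R^{n-l}$ reads
\begin{align*}
\int_{|x''|<\eps}|f(x',x'')|^p\frac{dx''}{|x''|^{sp}}\lesssim\|f(x',\cdot)|F_{p,q}^s(\R^{n-l})\|^p,
\end{align*}
valid precisely when $0<s<\frac{n-l}{p}$; this is a standard consequence of the discussion in \cite[Section 16.2]{Tri01}. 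Integrating in $x'\in\R^l$, using $d(x)=|x''|$ and invoking the Fubini property of $\FR$ (Proposition \ref{Fubini}, applied with the $l$ coordinates of $\R^l$ frozen) yields
\begin{align*}
\int_{(\hyp)_\eps}|f(x)|^p\frac{dx}{d^{sp}(x)}\lesssim\|f|\FR\|^p.
\end{align*}
The extension from $1<q\le\infty$ to all $0<q\le\infty$ follows from the monotonicity of the $F$-scale in $q$ for fixed $p$ and $s$, exactly as in the proof of Proposition \ref{Hardycrit}.

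\emph{Necessity.} Choose a non-negative $\psi\in\Sc(\R^n)$ with $\psi=1$ on $|x|\le\tfrac12$ and $\mathrm{supp}\,\psi\subset B$. For $J\in\N$ large, fix a unit vector $e_1\in\R^{n-l}$, set $x^J=(0,2^{-J}e_1)$, $\lambda=2^{-J-2}$, and define
\begin{align*}
f_J(x)=\lambda^{s-n/p}\,\psi\bigl(\lambda^{-1}(x-x^J)\bigr).
\end{align*}
Then $\mathrm{supp}\,f_J\subset B(x^J,\lambda)$, on which $d(x)=|x''|\sim 2^{-J}$, so by monotonicity $\varkappa(d(x))\ge\varkappa(c\cdot 2^{-J})$ for a fixed constant $c>0$. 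The homogeneity property (Proposition \ref{homogen}) combined with translation invariance of $\|\cdot|\FR\|$ gives $\|f_J|\FR\|\sim 1$, while a direct computation yields $\int|f_J|^p\sim \lambda^{sp}\sim 2^{-Jsp}$. Inserting $f_J$ into the assumed inequality,
\begin{align*}
\varkappa(c\cdot 2^{-J})^p\lesssim \varkappa(c\cdot 2^{-J})^p\cdot 2^{Jsp}\int|f_J|^p\le c\,\|f_J|\FR\|^p\lesssim 1,
\end{align*}
uniformly in $J$. Together with monotonicity of $\varkappa$ this forces $\varkappa$ to be bounded on $(0,\eps)$.

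\emph{Main obstacle.} The delicate point lies in the application of Proposition \ref{homogen}, which requires $s>\sigma_{p,q}$. For $p\ge 1$, $q\ge 1$ this is automatic ($\sigma_{p,q}=0$), but for $p\ge 1$ and $0<q<1$ the quantity $\sigma_{p,q}=n(1/q-1)$ may exceed $s$. In this range the atoms $f_J$ must be corrected to carry the relevant moment conditions, as explained in Step 5 of the proof of Theorem 13.2 in \cite{Tri01}; the pointwise lower bound on $f_J$ then survives only on a subset $A_J\subset\mathrm{supp}\,f_J$ of comparable measure, which is still enough to run the argument, exactly as at the end of the proof of Proposition \ref{Hardycrit}.
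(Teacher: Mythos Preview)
Your proof is correct and, for the ``if''-part, essentially identical to the paper's: reduce to the one-point Hardy inequality on $\R^{n-l}$ (in the paper this is the $(n-l)$-dimensional version of (16.15) in \cite{Tri01}), integrate over $x'\in\R^l$, and invoke the Fubini property together with monotonicity in $q$.

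For the ``only if''-part the overall idea is the same---test against functions concentrated in a shell at distance $\sim 2^{-J}$ from $\R^l$---but the implementation differs. The paper simply picks a single wavelet $\Phi_r^j$ from an oscillating $u$-Riesz basis for $\R^n$ with $\dist(\mathrm{supp}\,\Phi_r^j,\R^l)\sim 2^{-j}$, normalizes it by $2^{j(-s+n/p-n/2)}$, and appeals to the atomic representation theorem to get $\|f_j|\FR\|\lesssim 1$ directly. You instead build $f_J$ from a dilated bump and control its norm via the homogeneity property (Proposition \ref{homogen}). Both are legitimate; the paper's choice is slightly more economical because oscillating wavelets already carry moment conditions, so the argument works uniformly for all $0<q\le\infty$ without the separate repair for $q<1$ that you correctly flag. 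Your route has the advantage of being more self-contained (no need to import the wavelet basis), at the price of that extra case distinction.
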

\begin{proof}
The ``if-part'' can be handled in the same way as in Proposition \ref{Hardycrit} before. Now we use the $(n-l)$-dimensional version of $(16.15)$ in \cite{Tri01} having in mind $s-\frac{n-l}{p}=-\frac{n}{r}$. Here $p=1$ is allowed. Then we integrate over $x' \in \R^l$ and make use of the Fubini property \ref{Fubini} to get the desired result, using $d(x)=|x''|$.

For the ``only if-part'' we argue similar to (15.11) of \cite{Tri01}. We take
\begin{align*}
 f_j := 2^{j(-s+\frac{n}{p}-\frac{n}{2})} \cdot \Phi_{r}^j
\end{align*}
for $j \in \N$ with a wavelet $\Phi_{r}^j$ choosen from an oscillating $u$-Riesz basis for $\R^n$, see \cite[Theorem 1.20]{Tri08}, such that
\begin{align}
\label{wavedist}
 \dist(supp \ \Phi_{r}^j, \R^l) \sim 2^{-j},
\end{align}
for instance choose $m=(0,\ldots,0,1,1,\ldots,1)$ where the first $l$ coordinates are $0$. Obviously,
\begin{align*}
 f_j(x) = 2^{-j(s-\frac{n}{p})} \Phi_{r'}^0(2^j x)
\end{align*}
for a suitable $r' \in \Z^n$. Then by the atomic representation Theorem, see \cite[Theorem 3.12]{Sch13a}, we have $\|f_j|\FR\| \lesssim 1$. By \eqref{wavedist} we have for large $j$
\begin{align*}
 \int_{(\hyp)_{\eps}} \left|f_j(x) \right|^p \frac{ dx}{d^{sp}(x)} &\gtrsim 2^{-jp(s-\frac{n}{p})} \cdot \int_{supp \ \Phi_{r}^j} \frac{ dx}{d^{sp}(x)} \gtrsim 1. 
\end{align*}
Hence $\varkappa(t)$ must be bounded for $t\rightarrow 0$.
\end{proof}

\subsubsection{Definition of reinforced function spaces $\Frinf[\hyp]$}
Propositions \ref{Hardycrit} and \ref{Hardysubcrit} describe the different behaviour of the spaces $\FR[\frac{n-l}{p}]$ and $\FR$ for $0<s<\frac{n-l}{p}$ in terms of Hardy inequalities. For the space $\FR[\frac{n-l}{p}]$ we have a weaker inequality with a additional $\log$-term. This leads to the following definition of the reinforced spaces for $\Om=\hyp$ with $\partial \Om=\R^l$.

\begin{Definition}
\label{reinforcedhyp}
Let $1\leq p < \infty$, $0<q \leq \infty$ and $s> 0$. 

(i) Let $s-\frac{n-l}{p} \notin \N_0$. Then
\begin{align*}
 \Frinf[\hyp] :=\FR.
\end{align*}

(ii) Let $s-\frac{n-l}{p}=r \in \N_0$. Then
\begin{multline*}
  \Frinf[\hyp]\\
:=\left\{ f \in \FR: d^{-\frac{n-l}{p}} \cdot  D^{\alpha} f \in L_p((\hyp)_{\eps}) \text{ for all } \alpha \in \N_l^n, |\alpha|=r \right\}.
\end{multline*}
 \end{Definition}
\begin{Remark}
 For $s-\frac{n-l}{p}=r \in \N_0$ this space can be normed by
\begin{align*}
 \|f|\Frinf[\hyp]\|&:= \|f|\FR\| + \sum_{\underset{|\alpha|=r}{\alpha \in \N_l^n}}  \left(\int_{(\hyp)_{\eps}}  |D^{\alpha} f(x)|^p  \frac{ dx}{d^{n-l}(x)} \right)^{\frac{1}{p}}.
\end{align*}

\end{Remark}

\begin{Remark}
\label{indeps}
 The space $\Frinf[\hyp]$ does not depend on the choice of $\eps$ in the sense of equivalent norms since 
for $|\alpha|=r$ we have $s-r>0$ and hence
\begin{align*}
 D^{\alpha} f \in F_{p,q}^{s-r}(\R^n) \subset L_p(\R^n). 
\end{align*}
Furthermore, we can replace $d(x)$ by $\delta(x)=\min(d(x),1))$.
\end{Remark}
\begin{Remark}
 This definition is adapted by Definition 6.44 in \cite{Tri08}, where the case of a $C^{\infty}$-domain $\Om$ is considered and in this sense $l=n-1$. Then there is only one direction of derivatives to be treated - the normal derivative at $\partial \Om$.  
\end{Remark}
\begin{Remark}
Let $s-\frac{n-l}{p} \notin \N_0$. Let $f \in \FR$, $r:=\lfloor s-\frac{n-l}{p}\rfloor+1$ and additionally assume $s-r>0$: By classical properties of $\FR$ it holds
\begin{align*}
 D^{\alpha} f \in \FR[s-r] \text{ for } |\alpha|=r.
\end{align*}
Using the Hardy inequality from Proposition \ref{Hardysubcrit}  we automatically have 
\begin{align*}
 \int_{(\hyp)_{\eps}} \left|D^{\alpha} f(x) \right|^p \frac{ dx}{d^{(s-r)p}(x)} \lesssim \left\|D^{\alpha}f| \FR[s-r] \right\|^p \lesssim c \left\|f| \FR \right\|^p. 
\end{align*}
\end{Remark}

\begin{Remark}
\label{Frinfsmaller}
For the Triebel-Lizorkin spaces $\FR$ we always have
\begin{align*}
F_{p,q}^{s+\sigma}(\R^n)   \hookrightarrow \FR
\end{align*}
for $\sigma>0$. We cannot transfer such an embedding from $\FR$ to $\Frinf[\hyp]$: For incorporating the critical cases ($s-\frac{n-l}{p} \in \N_0$) we would have to show
\begin{align*}
 \|d^{-\frac{n-l}{p}} f|L_p((\hyp)_{\eps})\| \lesssim \|f|F_{p,q}^{\frac{n-l}{p}+\sigma}(\R^n)\|.
\end{align*}
Take a function $\psi \in D(\R^n)$ with $\psi(x)=1$ with $|x'|\leq 1, |x''|\leq 1$, then 
\begin{align*}
 \|d^{-\frac{n-l}{p}} \psi |L_p((\hyp)_{1})\|^p \geq  \int_{|x'|\leq 1} \int_{|x''|\leq 1} |x''|^{-(n-l)} \ dx'' \ dx'=\infty.
\end{align*}
But $\psi \in \FR$ for all $s>0$. This shows
\begin{align*}
 F_{p,q}^{\frac{n-l}{p}+\sigma}(\R^n)\lhook\joinrel\not\rightarrow F_{p,q}^{\frac{n-l}{p},\rinf}(\hyp), \quad 
  F_{p,q}^{\frac{n-l}{p},\rinf}(\hyp) \subsetneq F_{p,q}^{\frac{n-l}{p}}(\R^n). 
\end{align*}
Analogously  we have
\begin{align*}
 F_{p,q}^{r+\frac{n-l}{p}+\sigma}(\R^n) \lhook\joinrel\not\rightarrow F_{p,q}^{r+\frac{n-l}{p},\rinf}(\hyp), \quad 
 F_{p,q}^{r+\frac{n-l}{p},\rinf}(\hyp) \subsetneq F_{p,q}^{r+\frac{n-l}{p}}(\R^n).
\end{align*}
As a weaker version one can show that
\begin{align*}
 f \in F_{p,q}^{r+\frac{n-l}{p}+\sigma}(\R^n) \text{ belongs to } F_{p,q}^{r+\frac{n-l}{p},\rinf}(\hyp) 
\end{align*}
for $\sigma\in [0,1]$ if $\tr_l D^{\alpha}f= 0 \text{ for all } \alpha \in \N_l^n \text{ with } |\alpha|=r$, see \cite[Corollary 3.42]{Sch13}.
\end{Remark}

\subsection{Properties and alternative characterizations of refined localization spaces}
\label{section:refined}
Let $\Om$ be a domain with $\Om\neq \R^n$, $\Gamma=\partial \Om$, $d(x)=\dist(x,\Gamma)$ and 
 $\delta(x)=\min(d(x),1).$
\begin{Proposition}
 \label{rlocDiff}
Let $\Om$ be an arbitrary domain in $\R^n$. Let 
\begin{align*}
 0<p<\infty, 0<q<\infty, s-r>\sigma_{p,q}, \alpha \in \N^n   \text{ with }  |\alpha|=r. 
\end{align*}
It holds: If $f$ belongs to $\Frloc$, then $D^{\alpha} f$ belongs to $F_{p,q}^{s-r,\rloc}(\Om)$ with 
\begin{align*}
 \|D^{\alpha} f|F_{p,q}^{s-r,\rloc}(\Om)\|\lesssim \|f|\Frloc\| .
\end{align*}
\end{Proposition}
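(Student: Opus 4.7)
The strategy is to reduce to $|\alpha|=1$ by induction on $r$: assuming the single-derivative statement is available, we iterate it, since the hypothesis $s-r>\sigma_{p,q}$ guarantees $s-k>\sigma_{p,q}$ for every $0\leq k<r$. For the base case, let $\{\varrho_{j,k}\}$ denote the resolution of unity of Definition \ref{FrlocDef} (the Whitney index is denoted $k$ here to avoid a clash with $r$). By the Leibniz rule,
\begin{align*}
 \varrho_{j,k}\partial_i f = \partial_i(\varrho_{j,k}f) - (\partial_i\varrho_{j,k})\,f.
\end{align*}
The first term is immediate: $\|\partial_i(\varrho_{j,k}f)|F_{p,q}^{s-1}(\R^n)\|\lesssim \|\varrho_{j,k}f|\FR\|$ by the continuity of $\partial_i:\FR\to F_{p,q}^{s-1}(\R^n)$.

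For the second term, select an auxiliary cutoff $\eta_{j,k}$ supported in a cube of side $\sim 2^{-j}$ contained in $\Om$, identically $1$ on $\text{supp}\,\varrho_{j,k}$, with $|D^\gamma\eta_{j,k}|\lesssim 2^{j|\gamma|}$. Because only boundedly many neighbouring Whitney pieces $\varrho_{j',k'}$ have support meeting $\text{supp}\,\eta_{j,k}$,
\begin{align*}
 (\partial_i\varrho_{j,k})\,f = \sum_{(j',k')\in N(j,k)}(\partial_i\varrho_{j,k})\,\eta_{j,k}\,(\varrho_{j',k'}f).
\end{align*}
The main obstacle is controlling each summand: the multiplier $(\partial_i\varrho_{j,k})\eta_{j,k}$ is supported in a cube of side $\sim 2^{-j}$ with $L_\infty$-norm $\sim 2^j$ and $\gamma$-derivatives bounded by $2^{j(1+|\gamma|)}$, so a naive pointwise multiplier estimate on $F_{p,q}^{s-1}(\R^n)$ loses a factor $2^j$. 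This loss is absorbed using Proposition \ref{homogen}: rescale both the multiplier and the factor $\varrho_{j',k'}f$ (also supported in a cube of side $\sim 2^{-j}$) to the unit scale, where $2^{-j}(\partial_i\varrho_{j,k})\eta_{j,k}$ becomes a fixed-scale, uniformly $C^M$-bounded multiplier; apply a standard multiplier estimate on $F_{p,q}^{s-1}(\R^n)$ together with the embedding $\FR\hookrightarrow F_{p,q}^{s-1}(\R^n)$; and rescale back. The scaling exponents combine as $2^{j((s-1)-n/p)}\cdot 2^j\cdot 2^{-j(s-n/p)}=1$, giving
\begin{align*}
 \|(\partial_i\varrho_{j,k})\,\eta_{j,k}\,(\varrho_{j',k'}f)|F_{p,q}^{s-1}(\R^n)\| \lesssim \|\varrho_{j',k'}f|\FR\|
\end{align*}
uniformly in $j,k$. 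The homogeneity property requires precisely $s-1>\sigma_{p,q}$, which is why the iteration needs the full hypothesis $s-r>\sigma_{p,q}$.

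Summing $p$-th powers over $(j,k)$ yields
\begin{align*}
 \|\partial_i f|F_{p,q}^{s-1,\rloc}(\Om)\|^p \lesssim \sum_{j,k}\|\varrho_{j,k}f|\FR\|^p + \sum_{j,k}\sum_{(j',k')\in N(j,k)}\|\varrho_{j',k'}f|\FR\|^p,
\end{align*}
and the bounded-overlap property of the Whitney neighbourhood structure (each $(j',k')$ lies in $N(j,k)$ for a bounded number of $(j,k)$) collapses both sums to $\|f|\Frloc\|^p$. This completes the base case $r=1$; the inductive reduction then extends the estimate to general $r$.
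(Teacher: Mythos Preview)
Your proof is correct and follows essentially the same route as the paper: reduce to $|\alpha|=1$, apply the Leibniz rule, handle $\partial_i(\varrho_{j,k}f)$ directly, and control $(\partial_i\varrho_{j,k})f$ by expanding $f$ over the finitely many neighbouring Whitney pieces and applying the homogeneity property (Proposition~\ref{homogen}) together with a pointwise multiplier estimate at unit scale before summing via bounded overlap. The only cosmetic difference is your auxiliary cutoff $\eta_{j,k}$, which is harmless but unnecessary, since $\partial_i\varrho_{j,k}$ already has the required support; the paper simply uses $D^{\alpha}\big(\varrho_{j,r}(2^{-j}\cdot)\big)$ as the rescaled multiplier.
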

\begin{proof}
It suffices to prove the Proposition for $|\alpha|=1$. We will give a proof using the homogeneity Property \ref{homogen}. An alternative proof can be found using a general approach to atomic decompositions of $f \in \Frloc$, see \cite[Proposition 3.19]{Sch13}.

If $f \in \Frloc$, then $\varrho_{j,r} f \in \FR$ for $j \in \N_0, r\in \{1,\ldots,M_j\}$ and 
\begin{align*}
 D^{\alpha} (\varrho_{j,r} f)=(D^{\alpha} \varrho_{j,r}) \cdot f+\varrho_{j,r}\cdot D^{\alpha}f  \in \FR[s-1].
\end{align*}
By triangle inequality and classical differentiation properties of $\FR$ we get
\begin{align*}
 \|\varrho_{j,r} D^{\alpha} f|\FR[s-1]\| \lesssim \|(D^{\alpha} \varrho_{j,r}) \cdot f|\FR[s-1]\| + \|\varrho_{j,r} f|\FR\|.
\end{align*}
To prove the proposition, it suffices to estimate the $p$-sum of the first terms on the RHS by $\|f|\Frloc\|$. It holds
\begin{align*}
 (D^{\alpha} \varrho_{j,r}) \cdot f =  (D^{\alpha} \varrho_{j,r}) \cdot \sum_{|j-j'|\leq c}\sum_{r'} (\varrho_{j',r'} f),
\end{align*}
where $c$ and the number of summands in the sum over $r'$ are independent of $j$ and $r$, see \eqref{resun}.   

Now we make use of the homogeneity property, see Proposition \ref{homogen}, and pointwise multipliers, see \cite[Theorem 4.1]{Sch13a}. We get
\begin{align*}
 \|(D^{\alpha} &\varrho_{j,r}) \cdot f|\FR[s-1]\|  \\
&\lesssim  \sum_{|j-j'|\leq c}\sum_{r'} \|(D^{\alpha} \varrho_{j,r}) \cdot (\varrho_{j',r'} f)|\FR[s-1]\| \\
& \sim 2^{j(s-\frac{n}{p})} \sum_{|j-j'|\leq c}\sum_{r'} \|D^{\alpha} \left(\varrho_{j,r}(2^{-j}\cdot)\right) \cdot (\varrho_{j',r'} f)(2^{-j}\cdot)|\FR[s-1]\| \\
& \lesssim 2^{j(s-\frac{n}{p})} \sum_{|j-j'|\leq c}\sum_{r'} \|(\varrho_{j',r'} f)(2^{-j}\cdot)|\FR[s]\| \\
& \sim \sum_{|j-j'|\leq c}\sum_{r'} \|\varrho_{j',r'} f|\FR[s]\|,
\end{align*}
where the constants do not depend on $r$ or $j$, using property \eqref{resun}. Thus
\begin{align*}
 \sum_{j=0}^{\infty}\sum_{r=1}^{M_j}\|(D^{\alpha} \varrho_{j,r}) \cdot f|\FR[s-1]\|^p \lesssim \sum_{j=0}^{\infty}\sum_{r=1}^{M_j}\|\varrho_{j,r} \cdot f|\FR[s]\|^p=\|f|\Frloc\|^p.
\end{align*}

\end{proof}
\begin{Remark}
 For $\Om=\R^n$ there is the converse inequality
\begin{align*}
 \|f|\FR\| \leq c \sum_{|\alpha|\leq r} \|D^{\alpha} f|\FR[s-r]\|. 
\end{align*}
Such an inequality cannot hold for $\Frloc$ even on a $C^{\infty}$-domain $\Om$. For an argument see \cite[Remark 3.20]{Sch13} and for a possible weaker converse \cite[Corollary 3.44, Corollary 3.45]{Sch13}.
\end{Remark}

\begin{Proposition}
\label{rlocequi}
 Let $\Om$ be an arbitrary domain in $\R^n$ with $\Om\neq \R^n$, let 
\begin{align*}
0<p<\infty, 0< q < \infty, s>\sigma_{p,q}.
 \end{align*}
Then $f \in \Frloc$ if, and only if,
\begin{align*}
 \|f|\FO\| + \| \delta^{-s}(\cdot)f|L_p(\Om)\| < \infty.
\end{align*}
\end{Proposition}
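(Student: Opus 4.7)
My plan is to prove both inclusions with matching norm bounds, handling the easier direction first.

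\textbf{Forward direction.} Suppose $f \in \Frloc$. To bound $\|f|\FO\|$, observe that $g := \sum_{j,r} \varrho_{j,r} f$, extended by zero outside $\Om$, is an extension of $f$ to $\R^n$; because the enlarged cubes $Q_{j,r}^1$ have uniformly bounded overlap, a standard pasting estimate for $\FR$ yields $\|g|\FR\|^p \lesssim \sum_{j,r} \|\varrho_{j,r} f|\FR\|^p$. For the weighted $L_p$-term, I use $\delta(x) \sim 2^{-j}$ on $Q_{j,r}^0$ together with the fact that $\varrho_{j,r} f$ has support in a set of diameter $\sim 2^{-j}$. Applying Proposition \ref{homogen} (after a translation) together with the embedding $\FR \hookrightarrow L_p(\R^n)$ (valid since $s > \sigma_{p,q} \geq \sigma_p$) yields $\|\varrho_{j,r}f|L_p\| \lesssim 2^{-js}\|\varrho_{j,r}f|\FR\|$. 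Combined with $|f|^p \lesssim \sum_{j',r'}|\varrho_{j',r'}f|^p$ pointwise (bounded overlap plus $|j-j'|\leq 1$ for adjacent Whitney cubes), this gives
\[
\int_\Om \delta^{-sp}(x)|f(x)|^p\,dx \lesssim \sum_{j,r} 2^{jsp}\|\varrho_{j,r}f|L_p\|^p \lesssim \sum_{j,r}\|\varrho_{j,r}f|\FR\|^p = \|f|\Frloc\|^p.
\]

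\textbf{Reverse direction.} Conversely, assume the right-hand side is finite. Pick an extension $g \in \FR$ with $\|g|\FR\| \leq 2\|f|\FO\|$ and estimate each Whitney block $\|\varrho_{j,r}g|\FR\|$ by rescaling: set $H_{j,r}(x) := (\varrho_{j,r}g)(x_{j,r}+2^{-j}x)$ where $x_{j,r}$ is the center of $Q_{j,r}^0$, so that $H_{j,r}$ has support in a fixed bounded set and Proposition \ref{homogen} gives $\|\varrho_{j,r}g|\FR\| \sim 2^{j(s-n/p)}\|H_{j,r}|\FR\|$. A local equivalent quasi-norm for $\FR$ on compactly supported functions (via Littlewood--Paley decomposition or iterated differences) then allows $\|H_{j,r}|\FR\| \lesssim \|H_{j,r}|L_p\| + \Sigma_{j,r}$, where the smoothness term $\Sigma_{j,r}$ is handled by treating $\varrho_{j,r}(x_{j,r}+2^{-j}\cdot)$ as a pointwise multiplier with uniformly bounded derivatives of all orders. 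After scaling back and using $\varrho_{j,r}g=\varrho_{j,r}f$ on $\Om$ together with $\delta\sim 2^{-j}$ on the supports, the $L_p$-contribution sums (by reversing the bounded-overlap argument from the forward direction) to $\lesssim \|\delta^{-s}f|L_p(\Om)\|^p$, while the smoothness contributions, by bounded overlap of the $Q_{j,r}^1$, telescope into $\lesssim \|g|\FR\|^p \lesssim \|f|\FO\|^p$.

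\textbf{Main obstacle.} The delicate step is the second one: rigorously establishing the local equivalent quasi-norm and tracking how the scaling factor $2^{j(s-n/p)}$ combines with the pointwise-multiplier bounds and the $\ell_p$-summation over cubes, without generating stray powers of $2^{js}$ that would break the estimate. This essentially refines Triebel's arguments in \cite[Corollary 5.15]{Tri01} and \cite[Theorem 2.18]{Tri08} to the full generality of arbitrary $\Om \neq \R^n$ (i.e.\ dropping any $E$-thickness or $C^\infty$-boundary hypothesis). The density of $D(\Om)$ in $\Frloc$ recorded in Remark \ref{rlocDense} can be invoked to reduce the key inequalities to smooth test functions whenever a limiting argument is convenient.
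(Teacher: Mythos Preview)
Your forward direction is essentially the paper's: the homogeneity estimate $\|\varrho_{j,r}f|L_p\|\lesssim 2^{-js}\|\varrho_{j,r}f|\FR\|$ followed by bounded-overlap summation is exactly Step~1 there (the paper obtains $\|f|\FO\|\lesssim\|f|\Frloc\|$ via the wavelet/atomic representation rather than a pasting argument, but this is a cosmetic difference).

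Your reverse direction, however, is a genuinely different route and contains a gap. You attempt to bound $\sum_{j,r}\|\varrho_{j,r}g|\FR\|^p$ directly by rescaling each block, splitting the rescaled norm into an $L_p$-piece and a ``smoothness piece'' $\Sigma_{j,r}$, and asserting that the $\Sigma_{j,r}$ ``telescope into $\lesssim\|g|\FR\|^p$''. But this last assertion is precisely the nontrivial content of the reverse inclusion: for a general $g\in\FR$ the localized $\FR$-norms over a Whitney family do \emph{not} sum to $\|g|\FR\|^p$ (otherwise $\Frloc=\FO$ would hold). The pointwise-multiplier bound on $\varrho_{j,r}(x_{j,r}+2^{-j}\cdot)$ only tells you that the rescaled multiplier norms are uniformly bounded; it does not supply any cancellation or orthogonality across scales $j$, and without that you will pick up exactly the stray factor $2^{js}$ you are worried about. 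No concrete local-norm identity is stated that would separate the ``smoothness'' from the $L_p$-part in a summable way.

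The paper avoids this difficulty by a different mechanism: it expands $f\in L_v(\Om)$ in the orthonormal $u$-wavelet basis of Theorem~\ref{rlocwavelet} and splits $f=f_1+f_2$ into \emph{boundary} wavelets ($\dist(\mathrm{supp}\,\Phi_r^{j,1},\Gamma)\sim 2^{-j}$) and \emph{interior} wavelets (with moment conditions). For the interior part the local-mean theorem gives $\|\lambda^{j,2}(f)|\fO\|\lesssim\|f|\FO\|$ directly---the moment conditions are what make the ``smoothness'' summable here. For the boundary part, at each $x\in\Om$ only boundedly many boundary wavelets overlap, so the sequence norm is $q$-independent and computes explicitly to $\|\delta^{-s}f_1|L_p(\Om)\|$; a triangle-inequality detour through $f_2$ then controls this by $\|f|\FO\|+\|\delta^{-s}f|L_p(\Om)\|$. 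If you want to salvage your direct approach, you would need an analogue of the moment-condition mechanism---e.g.\ a Littlewood--Paley or local-oscillation characterization in which the contribution at Whitney level $j$ is genuinely a $j$-th Littlewood--Paley piece of $g$, so that the $\ell_p$-sum over $j$ reproduces $\|g|\FR\|^p$. That is not what your sketch currently provides.
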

\begin{proof}

\textit{First step:} Let $f \in \Frloc$. There is a wavelet characterization of $f$ by Theorem \ref{rlocwavelet} which leads to an atomic decomposition of $f \in \FR$, thus
\begin{align*}
 \|f|\FO\| \lesssim \|f|\Frloc\|.
\end{align*}
Furthermore, let $\varrho=\{\varrho_{j,r}\}$ be the resolution of unity adapted to the Whitney cubes $Q_{j,r}^1$. It holds
\begin{align*}
 d(x) &\sim 2^{-j} \text{ for } x \in supp \ \varrho_{j,r} \text{ for } j \in \N; \quad d(x) \gtrsim 1 \text{ for } x \in supp \ \varrho_{0,r}.
\end{align*}
We use the homogeneity property from Proposition \ref{homogen} to get
\begin{align*}
\|\delta^{-s} &\varrho_{j,r} f|L_p(\R^n)\|\sim 2^{js} \|\varrho_{j,r} f\|L_p(\R^n)\| \sim 2^{j(s-\frac{n}{p})} \|\left(\varrho_{j,r} f\right)(2^{-j}\cdot)\|L_p(\R^n)\| \\
&\lesssim 2^{j(s-\frac{n}{p})}  \|\left(\varrho_{j,r} f\right)(2^{-j}\cdot)\|\FR\| \sim \|\varrho_{j,r} f\|\FR\|  
\end{align*}
with constants independent of $j$ and $r$. 
Thus we arrive at
\begin{align*}
 \|\delta^{-s}f|L_p(\R^n)\| &\sim \left(\sum_{j,r}\| \delta^{-s} \varrho_{j,r} f|L_p(\R^n)\|^p\right)^{\frac{1}{p}} \lesssim \left(\sum_{j,r}\| \varrho_{j,r} f|\FR\|^p\right)^{\frac{1}{p}}.
\end{align*}

\textit{Second step:} Let $f \in \FO$ with $\delta^{-s}(\cdot)f \in L_p(\Om)$. Then $f \in L_v(\Om)$ for a $v>\max(1,p)$. Hence we can find a wavelet representation on $\Om$, see \cite[Theorem 2.36 ]{Tri08} in analogy to Theorem \ref{rlocwavelet}, thus
\begin{align*}
f=\sum_{j=0}^{\infty}\sum_{r=1}^{N_j} \lambda_r^j(f) 2^{-\frac{jn}{2}}\Phi_r^j
\end{align*}
with $\lambda_r^j(f) \in f_{v,2}^{0}(\Z^{\Om})$.
We split $f=f_1+f_2$, where $f_1$ collects the boundary wavelets (without moment conditions) with  
\begin{align}
\label{f1dist}
\dist(supp\ \Phi_r^{j,1},\Gamma) \sim 2^{-j}
\end{align}
and $f_2$ collects the interior wavelets (with moment conditions) with
\begin{align*}
\dist(supp\ \Phi_r^{j,2},\Gamma) \gtrsim 2^{-j}.
\end{align*}
The wavelets $\Phi_r^{j,2}$ fulfil appropriate derivative and moment conditions. Thus by  local mean Theorem 1.15 from \cite{Tri08} used for the orthogonal wavelets $\Phi_r^{j,2}$ we get
\begin{align*}
 \|\lambda_r^{j,2}(f)|\fO\|&=2^{jn/2} \| (f,\Phi_{r}^{j,2})|\fO\|=2^{jn/2} \| (\tilde{f},\Phi_{r}^{j,2})|\fO\| \\
&\lesssim \|\tilde{f}|\FR\|,
\end{align*}
where $\tilde{f}$ is an arbitrary extension of $f$ from $\Om$ to $\R^n$ (the values outside of $\Om$ do not matter for $(f,\Phi_{r}^{j,2})$). Taking the infimum over the $\FR$-norms we get
\begin{align*}
 \|\lambda_r^{j,2}(f)|\fO\| \lesssim \|f|\FO\|.
\end{align*}
Hence $f_2 \in \Frloc$ by the wavelet Theorem \ref{rlocwavelet} for $\Frloc$. By the first step
\begin{align*}
 \|\delta^{-s}f_2|L_p(\Om)\| \lesssim \|f|\FO\|.  
\end{align*}
Using triangle inequality this leads to
\begin{align}
\label{Lpd}
 \|\delta^{-s}f_1|L_p(\Om)\| \lesssim \|f|\FO\| + \|\delta^{-s}f|L_p(\Om)\|.
\end{align}
Furthermore, $\|2^{js} \lambda_r^{j,1}|f_{p,q}^{0}(\Z^{\Om})\|$ is independent of $q$ - there is a constant $C>0$ such that for all $x \in \Om$ not more than $C$ boundary wavelets are supported at $x$. This argument was also used in the proof of Theorem 2.28 in \cite{Tri08} refering to \cite[Remark 2.25]{Tri08}. Hence
\begin{align*}
  \|\lambda_r^{j,1}|f_{p,q}^{s}(\Z^{\Om})\|
&\sim \|2^{js} \lambda_r^{j,1}|f_{p,q}^{0}(\Z^{\Om})\| \sim \|2^{js} \lambda_r^{j,1}|f_{p,p}^{0}(\Z^{\Om})\| \sim \|\delta^{-s}f_1|L_p(\Om)\|
  \end{align*}
by direct calculation of the $L_p(\Om)$-norm and \eqref{f1dist}. Now, using \eqref{Lpd} we have 
\begin{align*}
  \|\lambda_r^{j,1}|f_{p,q}^{s}(\Z^{\Om})\| \lesssim \|f|\FO\| + \|\delta^{-s}f|L_p(\Om)\|,
\end{align*}
which proves that also $f_1 \in \Frloc$ by the wavelet Theorem \ref{rlocwavelet}.
\end{proof}

\subsection{Reinforced function spaces: Traces}
\label{traces}
As stated earlier Proposition \ref{decomptri} cannot hold when $r=s-\frac{n-l}{p} \in \N$. The aim of the following sections is to find a substitute. We have to care about traces at $\R^l$ for our newly introduced function spaces $\Frinf[\hyp]$ instead of $\FR$.

Let $x=(y,z) \in \R^l \times \R^{n-l}$. By $\tr_l$ we denote the trace operator 
\begin{align*}
 \tr_l: f(x) \mapsto f(y,0), \text{ for } f \in \FR
\end{align*}
on $\R^l$ (if it exists) and by  $\tr_{l}^r$ the composite map of all traces of derivatives with order not larger than $r$ and perpendicular to $\R^l$
\begin{align*}
  \tr_{l}^r: f \mapsto \left\{\tr_l D^{\alpha} f: \alpha \in \N_l^n, |\alpha|\leq r \right\}.
\end{align*}
For further informations on traces see \cite[Section 5.11]{Tri08} or {\cite[Section 4.4]{Tri92}.

\begin{Proposition}[Traces]
\label{Tra}
 Let $l \in \N_0$, $n \in \N$ with $l<n$ and $r \in \N_0$. Let $1 \leq p<\infty, 0<q<\infty$ and $
s>r + \frac{n-l}{p}.$
Then
\begin{align*}
   \tr_l^r:& \Frinf[\R^n] \rightarrow \prod_{\underset{|\alpha|\leq r}{\alpha \in \N_l^n}} F_{p,p}^{s-\frac{n-l}{p}-|\alpha|}(\R^l).
\end{align*}
\end{Proposition}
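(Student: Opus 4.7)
My plan is to reduce the assertion to the classical trace theorem for Triebel--Lizorkin spaces on $\R^n$, applied termwise to the derivatives $D^\alpha f$.

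First I would note that by Definition \ref{reinforcedhyp} the reinforced space embeds continuously into $\FR$: in the non-critical case the two spaces coincide, while in the critical case $s-\frac{n-l}{p}\in\N_0$ the reinforced norm is defined as $\|f|\FR\|$ plus a non-negative weighted $L_p$-term. Consequently, for every multi-index $\alpha\in\N_l^n$ with $|\alpha|\leq r$ the standard continuous mapping property of differentiation yields
\begin{align*}
 D^\alpha:\ \Frinf[\R^n]\ \hookrightarrow\ \FR\ \to\ F_{p,q}^{s-|\alpha|}(\R^n).
\end{align*}

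Next, the hypothesis $s>r+\frac{n-l}{p}$ implies $s-|\alpha|>\frac{n-l}{p}$ for every such $\alpha$, so the classical trace theorem for Triebel--Lizorkin spaces (see \cite[Section 4.4]{Tri92} or \cite[Section 5.11]{Tri08}) provides a bounded operator
\begin{align*}
 \tr_l:\ F_{p,q}^{s-|\alpha|}(\R^n)\ \to\ F_{p,p}^{s-|\alpha|-\frac{n-l}{p}}(\R^l).
\end{align*}
Composing with $D^\alpha$ and taking the product norm over the finitely many $\alpha\in\N_l^n$ with $|\alpha|\leq r$ would then give the claim.

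I do not expect any serious obstacle here: the strict inequality $s>r+\frac{n-l}{p}$ ensures that every intermediate smoothness $s-|\alpha|$ stays strictly above the trace threshold $\frac{n-l}{p}$, so we remain in the regime where the standard trace theorem applies directly and never need to exploit the reinforce property itself. That property constrains only derivatives of order $r'=s-\frac{n-l}{p}$ in the critical case, and the present hypothesis forces $r'>r\geq|\alpha|$; hence it plays no role in this proposition and will become essential only in the subsequent decomposition theorems, where one must work with vanishing traces of orders up to the critical order $r'$.
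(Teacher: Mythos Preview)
Your argument is correct and follows essentially the same route as the paper: the paper's proof consists of the single observation that $\Frinf[\hyp]\hookrightarrow\FR$ together with an appeal to Proposition~6.17 in \cite{Tri08}, which is precisely the classical trace result you invoke. Your version simply unpacks $\tr_l^r$ into its components $\tr_l\circ D^\alpha$ and makes explicit why the reinforce condition is irrelevant under the strict hypothesis $s>r+\frac{n-l}{p}$; this is a welcome elaboration but not a different approach.
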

\begin{proof}
 This follows from $\Frinf[\R^n] \hookrightarrow \FR$ and Proposition 6.17 in \cite{Tri08}. The replacement of $Q_l$ by $\R_l$ is immaterial.
\end{proof}

\subsection{Decomposition theorems for $\Frinf[\hyp]$ adapted to wavelets}

Our main goal of this section is the proof of the Theorems \ref{Zerleger} and \ref{Zerlegercrit} which are the substitutes for Proposition \ref{decomptri} originating from (6.68) in \cite{Tri08}. It can be used later on for the construction of the wavelet bases on cubes.

A similar observation for the more special $C^{\infty}$-domains is the following proposition, where only traces perpendicular to the boundary $\partial \Om$ are to be considered. A proof of these results is given in \cite[Section 2.4.5]{Tri99}.

\begin{Proposition} 
\label{inftydom}
 Let $\Om$ be a bounded $C^{\infty}$-domain in $\R^n$. Let $1\leq p<\infty$, $0<q<\infty$ and 
  $0<s-\frac{1}{p} \notin \N.$ Then
\begin{align*}
 \Ft=\Frloc=\{f \in \FO: \tr_{\partial \Om}^{r} f = 0 \}.
\end{align*}
\end{Proposition}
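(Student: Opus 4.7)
The plan is to prove the three-fold identity $\Ft = \Frloc = \{f \in \FO: \tr_{\partial\Om}^r f = 0\}$ in three steps, where $r = \lfloor s - 1/p \rfloor \in \N_0$ is singled out by the hypothesis $0 < s - 1/p \notin \N$.

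First, the identity $\Ft = \Frloc$ is immediate from Remark \ref{Rem:Ethick}, since every bounded $C^\infty$-domain is $E$-thick. For the inclusion $\Frloc \subseteq \{f \in \FO: \tr_{\partial\Om}^r f = 0\}$ the argument is by density. By Remark \ref{rlocDense} (recall $0 < p, q < \infty$) the space $D(\Om)$ is dense in $\Frloc$, the embedding $\Frloc \hookrightarrow \FO$ is continuous (immediate from Proposition \ref{rlocequi}), and the composite trace $\tr_{\partial\Om}^r$ is continuous on $\FO$ by the $C^\infty$-boundary analogue of Proposition \ref{Tra}. Since $\tr_{\partial\Om}^r \varphi = 0$ for every $\varphi \in D(\Om)$, passing to the limit along an approximating sequence yields the inclusion.

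The substantive direction is $\{f \in \FO: \tr_{\partial\Om}^r f = 0\} \subseteq \Frloc$. By Proposition \ref{rlocequi} it is enough to show $\|\delta^{-s}(\cdot) f | L_p(\Om)\| < \infty$. A finite $C^\infty$-atlas of boundary charts with a subordinated partition of unity reduces this to the model setting where $\partial\Om$ is flattened to the hyperplane $\R^{n-1}$, matching $\hyp$ with $l = n-1$ and $d(x) = |x_n|$. On this model, Lemma \ref{HardyZerleger} applied with derivative order $r+1$ uses exactly the hypothesis $\tr D^\beta f = 0$ for $\beta \in \N_{n-1}^n$, $|\beta| \leq r$, and produces
\begin{align*}
\|d^{-s}(\cdot) f | L_p(\R^n)\| \lesssim \sum_{\substack{|\alpha| = r+1 \\ \alpha \in \N_{n-1}^n}} \|d^{-(s-r-1)}(\cdot) D^\alpha f | L_p(\R^n)\|.
\end{align*}
Since $s - 1/p \notin \N$, one has $s - r - 1 < 1/p$. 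When $0 < s - r - 1 < 1/p$, Proposition \ref{Hardysubcrit} applied to $D^\alpha f \in \FR[s-r-1]$ bounds the right-hand side by $\|f|\FR\|$; when $s - r - 1 \leq 0$ the weight $d^{-(s-r-1)}$ is bounded on the localization patch, and Sobolev-type embeddings on the compactly supported pieces finish the estimate. Gluing the local estimates via the partition of unity yields $\|\delta^{-s}(\cdot) f | L_p(\Om)\| \lesssim \|f | \FO\|$, as required.

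The hard part will be the localization in the last step: the boundary-straightening diffeomorphisms create commutator terms with the cut-offs, and one has to verify that the vanishing-trace hypothesis survives the pull-back so that each Hardy application remains legitimate. A secondary delicacy is the subcase $s - r - 1 \leq 0$, where $D^\alpha f$ is only a distribution of negative smoothness and $L_p$-integrability on the patches has to be extracted by embedding arguments applied to the cut-off pieces. Once this bookkeeping is in place, the chain of inclusions closes, and the proposition follows.
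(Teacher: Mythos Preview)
The paper does not give a proof of Proposition~\ref{inftydom} at all: it simply records the statement as a known result from \cite[Section~2.4.5]{Tri99} and then uses it as a black box inside the proof of Theorem~\ref{Zerleger}. Your proposal, by contrast, tries to derive the proposition from the tools developed later in the present paper (Proposition~\ref{rlocequi}, Lemma~\ref{HardyZerleger}, Proposition~\ref{Hardysubcrit}), which is a genuinely different and more self-contained route. This is a reasonable idea, and for a large part of the parameter range it would work once the density argument extending Lemma~\ref{HardyZerleger} from $C^{r+1}$ to $\FR$ (as in the remark preceding Corollary~\ref{HardyZerlegerC}) is carried out.

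However, there is a real gap in the subcase you flag as a ``secondary delicacy''. When $r=0$ and $\tfrac1p < s \le 1$ (which occurs for any $p>1$), your Hardy chain applies Lemma~\ref{HardyZerleger} with one derivative and produces the weight $d^{-(s-1)}$ against $\partial_{x_n} f$. Here $\partial_{x_n} f \in F_{p,q}^{s-1}(\R^n)$ with $s-1\le 0$, so it is in general only a tempered distribution, not an $L_p$-function; the right-hand side of your displayed inequality is therefore not defined as an $L_p$-norm, and no ``Sobolev-type embedding on compactly supported pieces'' can repair this --- compact support does not turn a negative-order distribution into an $L_p$-function. In this regime the Hardy step must be replaced by an argument that does not pass through the derivative, for instance the direct Hardy inequalities for $\FR$-functions with vanishing trace in the range $\tfrac1p<s<1+\tfrac1p$ found in \cite{Tri01} and \cite{Tri99}. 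That is precisely the kind of input the paper imports by citing \cite{Tri99}, so your attempt to avoid that citation does not quite close in this range.
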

In the following it will be easier to assume $q\geq 1$. We will give some remarks for the cases $0<q<1$ later on in Remark \ref{ZerlegerRem}.

\subsubsection{Hardy inequalities using boundary conditions at $\R^l$}
The next lemma will be a crucial observation for what follows later. It is somehow an $n$-dimensional version of the known Hardy inequality going back to \cite{Har20}, where here functions vanishing at $l$-dimensional planes are considered.
\begin{Lemma}[Hardy inequality]
 \label{HardyZerleger}
Let $n \in \N$, $l \in \N_0$, $l<n$ and $r \in \N$. Let $1\leq p <\infty$, $s>r-1+\frac{n-l}{p}$ and $d(x)$ be the distance of $x=(x',x'') \in \R^l \times \R^{n-l}$ from $\R^l$.
Then there is a constant $c>0$ such that 
\begin{align*}
 \|d^{-s}(\cdot) f|L_p(\R^n)\| \leq c \sum_{\underset{|\alpha|=r}{\alpha \in \N_l^n}} \|d^{-s+r}(\cdot) D^{\alpha}f|L_p(\R^n)\|
\end{align*}
for all $f \in C^r(\R^n)$ with $(D^{\beta}f)(x',0)=0$ for all $x' \in \R^l$ and $\beta \in \N_l^n$ with $|\beta|\leq r-1$.
\end{Lemma}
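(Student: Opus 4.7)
The plan is to reduce the inequality to a one-dimensional weighted Hardy inequality along rays emanating from $\R^l$, applied iteratively $r$ times. The key point is that the distance $d(x)=|x''|$ depends only on the component $x''\in\R^{n-l}$, so for each fixed $x'\in\R^l$ we work in $\R^{n-l}$, and we pass to polar coordinates $x''=\rho\omega$ with $\rho=|x''|\geq 0$, $\omega\in S^{n-l-1}$.

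For fixed $x'\in\R^l$ and $\omega\in S^{n-l-1}$, define $g(\rho):=f(x',\rho\omega)$. Since $f\in C^r(\R^n)$, the function $g$ is $C^r$ on $[0,\infty)$, and by the chain rule
\begin{align*}
 g^{(k)}(\rho)=\sum_{\underset{|\alpha|=k}{\alpha\in\N_l^n}} \frac{k!}{\alpha!}\omega^{\alpha}(D^{\alpha}f)(x',\rho\omega),\qquad 0\leq k\leq r.
\end{align*}
The hypothesis $(D^{\beta}f)(x',0)=0$ for $\beta\in\N_l^n$, $|\beta|\leq r-1$, translates into $g^{(k)}(0)=0$ for $0\leq k\leq r-1$, while $|\omega|=1$ gives the pointwise bound
\begin{align*}
 |g^{(r)}(\rho)|\leq C\sum_{\underset{|\alpha|=r}{\alpha\in\N_l^n}} |(D^{\alpha}f)(x',\rho\omega)|.
\end{align*}

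The main step is the classical one-dimensional weighted Hardy inequality: if $h\in C^1([0,\infty))$ with $h(0)=0$ and $\gamma<p-1$, then
\begin{align*}
 \int_0^{\infty} |h(\rho)|^p \rho^{\gamma-p}\,d\rho \leq c\int_0^{\infty} |h'(\rho)|^p\rho^{\gamma}\,d\rho.
\end{align*}
Applied with $\gamma=-(s-1)p+(n-l)-1$, this becomes the weighted Hardy inequality on the half-line with measure $\rho^{n-l-1}d\rho$, which is admissible precisely when $s>\frac{n-l}{p}$. Iterating this inequality $r$ times on $g,g',\ldots,g^{(r-1)}$ (each of which vanishes at $0$), the exponent $s$ gets replaced by $s-k$ at the $k$-th step, so the cumulative admissibility condition is $s-(r-1)>\frac{n-l}{p}$, i.e. $s>r-1+\frac{n-l}{p}$, which is exactly the assumption of the lemma. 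This yields
\begin{align*}
 \int_0^{\infty} |g(\rho)|^p \rho^{-sp+n-l-1}\,d\rho \leq c \int_0^{\infty} |g^{(r)}(\rho)|^p \rho^{-(s-r)p+n-l-1}\,d\rho.
\end{align*}

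Finally, I would integrate this inequality over $\omega\in S^{n-l-1}$, reassemble polar coordinates into Lebesgue measure on $\R^{n-l}$ (thereby absorbing the factors $\rho^{n-l-1}$), apply the pointwise bound on $|g^{(r)}|$ and the triangle inequality for sums, and integrate the result over $x'\in\R^l$ by Fubini to obtain
\begin{align*}
 \int_{\R^n} \frac{|f(x)|^p}{d^{sp}(x)}\,dx \leq c\sum_{\underset{|\alpha|=r}{\alpha\in\N_l^n}} \int_{\R^n} \frac{|D^{\alpha}f(x)|^p}{d^{(s-r)p}(x)}\,dx,
\end{align*}
which is the claim. The only mildly delicate point is the clean iteration of the weighted one-dimensional Hardy inequality together with book-keeping of the vanishing conditions $g^{(k)}(0)=0$; the case $p=1$ requires noting that the weighted Hardy inequality still holds under the strict inequality $\gamma<p-1=0$ imposed at each step.
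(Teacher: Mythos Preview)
Your proof is correct and follows essentially the same approach as the paper: reduce to the one-dimensional weighted Hardy inequality along rays via polar coordinates in $\R^{n-l}$, then integrate over $x'\in\R^l$. The only cosmetic difference is that the paper proves the case $r=1$ first and then inducts on the derivatives $D^{\alpha}f$, whereas you iterate the one-dimensional Hardy inequality $r$ times directly on the radial function $g$; both organizations use the same condition $s>r-1+\frac{n-l}{p}$ at the final step.
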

\begin{proof}
At first let us prove this lemma for $r=1$: Let $x=(x',x'') \in \R^l \times \R^{n-l}$. We fix $x',x''$ with $x''\neq 0$ and consider the one-dimensional function
\begin{align*}
 g: \R^+ \rightarrow \C, \: t \mapsto f\left(x',t \cdot \frac{x''}{|x''|}\right).
\end{align*}
Then $g(0)=f(x',0)=0$ and thus
\begin{align*}
 g(t)=&g(t)-g(0)=\int_0^{t} g'(u) \ du=\int_0^{t} \sum_{j=n-l+1}^n \frac{x_j}{|x''|} \cdot \frac{\partial f(x'',u \cdot \frac{x''}{|x''|})}{\partial x_j} \ du \\
&\leq \int_0^{t} \left|\nabla_{n-l} f\left(x'',u \cdot \frac{x''}{|x''|}\right)\right| \ du 
\end{align*}
by Cauchy's inequality. Now we apply the Hardy inequality for weighted one-dimensional $L_p$-spaces to the function $g$
\begin{align}
\label{Hardyone}
 \int_0^{\infty} \left(\frac{|f(x',t \cdot \frac{x''}{|x''|})|}{t}\right)^p \cdot t^{\alpha} \ dt \lesssim \int_0^{\infty}  \sum_{j=n-l+1}^n \left|\frac{\partial f(x',t \cdot \frac{x''}{|x''|})}{\partial x_j}\right|^p t^{\alpha} \ dt
\end{align}
for $1\leq p < \infty$ and $p>\alpha + 1$.

We integrate with respect to $(n-l)$-dimensional spherical coordinates
\begin{align}
\label{spherecord}
 \int\limits_{x'' \in \R^{n-l}} |h(x'')|^p \ dx'' = \int_{B} \tau(y) \int_0^{\infty}  t^{n-l-1}|h(ty)|^p \ dy \ dt,
\end{align}
where $B:=\{y \in \R^{n-l}: |y|=1\}$ and $\tau$ is a positive function depending only on the angle of $y$, but independent of the absolute value of $y$.

Let $h(x''):=f(x',x'') \cdot |x''|^{-s}$ for $x=(x',x'') \in \R^l \times \R^{n-l}$. Then the inner integral in \eqref{spherecord} can be estimated using \eqref{Hardyone} for every $x'' \in \R^{n-l}$. We get
\begin{align*}
 \int_0^{\infty}  t^{n-l-1}|f(x',ty)|^p t^{-sp} \ dt \lesssim \int_0^{\infty}  t^{n-l-1} \sum_{j=n-l+1}^n \left|\frac{\partial f(x',ty)}{\partial x_j}\right|^p t^{(-s+1)p} \ dt
\end{align*}
if $p \geq 1$ and $p>1+n-l-1+(-s+1)p$. The second condition is equivalent to $s>\frac{n-l}{p}$. Putting together this pointwise estimate and  \eqref{spherecord} we arrive at
\begin{align*}
 \int\limits_{x'' \in \R^{n-l}} |f(x',x'')|^p |x''|^{-sp} \ dx'' \lesssim  \int\limits_{x'' \in \R^{n-l}}  \sum_{j=n-l+1}^n \left|\frac{\partial f(x',x'')}{\partial x_j}\right|^p |x''|^{(-s+1)p} \ dx'',
\end{align*}
with constants independent of $x' \in \R^l$. Integrating over $x' \in \R^l$ finishes the lemma for  $r=1$. 

The general assertion of our lemma for arbitrary $r \in \N$ follows by mathematical induction using the same arguments for the derivatives $D^{\alpha} f$ instead of $f$ itself. Then we need $(D^{\alpha}f)(x',0)=0$ for $|\alpha|\leq r-1$ and $s>r-1+\frac{n-l}{p}$. 
\end{proof}

\begin{Remark}
 Let $1\leq p < \infty$, $1 \leq q <\infty$. In Lemma \ref{HardyZerleger} we assumed $f \in C^r(\R^n)$ with $\tr_l^{r-1} f=0$. But this lemma also holds true for $f \in \FR$ with $s=r+\frac{n-l}{p}$ and $\tr_l^{r-1} f=0$. Here is a sketch of the arguments: 
 
 Let $\R^+=\{x \in \R: x>0 \}.$ In the proof we used
 \begin{align}
 \label{inteq}
  g(t)=\int_0^t g'(u) \ du 
 \end{align}
for $g \in C^1(\R^+)$ with $g(0)=0$. We want to prove that identity \eqref{inteq} also holds true for $g^* \in \FO[\R^+]$ with $s=1+\frac{1}{p}$ and $\tr_{\{0\}} g^*=0$ (only one trace).   

For an extension $g \in \FO[\R]$ of $g^* \in \FO[\R^+]$ we find a sequence of functions $\varphi_j \in \Sc(\R)$ with $g_j:=g * \varphi_j \rightarrow g$ in $\FO[\R]$. Since $s>1$, both $g$ and its (distributional) derivative $g'$ belong to $L_p(\R)$ and hence $g * \varphi_j \rightarrow g$ and $g_j'=g' * \varphi_j \rightarrow g'$ in $L_p(\R)$. By choosing a subsequence we can assume that both sequences converge almost everywhere. Furthermore, we have $s=1+\frac{1}{p}>\frac{1}{p}$ and hence by Proposition \ref{Tra} the trace operator is continuous. This shows
\begin{align*}
 \tr_{\{0\}} g_j \rightarrow \tr_{\{0\}} g=0.
\end{align*}
Now we arrive at
\begin{align*}
  |g(t)&- \int_0^t g'(u) \ du|\\ 
  &\leq |g(t)-g_j(t)| + \left|g_j(t)- \int_0^t g_j'(u) \ du\right| + \left|\int_0^t (g_j'(u) -  g'(u)) \ du\right| \\
  &\leq |g(t)-g_j(t)| + |\tr_{\{0\}} g_j|+ c_t \|g_j'-g'|L_p(\R)\|.
\end{align*}
For almost every $t$ these three terms converge to $0$.

So, let now $l$ and $n$ be as in Lemma \ref{HardyZerleger} and (as in the proof) at first $r=1$. Then $f \in \FR$ with $s=1+\frac{n-l}{p}$ and $\tr_l f=0$ (only the trace of $f$ itself). In the proof of Lemma \ref{HardyZerleger} we constructed the function
\begin{align*}
 g_{x',x''}: \R^+ \rightarrow \C: t \mapsto f\left(x',t \cdot \frac{x''}{|x''|}\right). 
\end{align*}
But, if $f \in \FR$ with $s=1+\frac{n-l}{p}$, then $h_{x'}(x''):=f(x',x'') \in \FO[\R^{n-l}]$ for almost every $x'$ in $\R^l$ by the Fubini property \ref{Fubini}. Furthermore, using the properties of the trace operator of $F_{p,q}^{1+\frac{n-l}{p}}(\R^{n-l})$ onto one-dimensional lines (see Proposition \ref{Tra}) we get that
\begin{align*}
 g_{x',x''}: \R^+ \rightarrow \C: t \mapsto h_{x'}\left(t \cdot \frac{x''}{|x''|}\right) \in \Fpp{1+\frac{1}{p}}{\R^+}
\end{align*}
for almost all $x' \in \R^{l}$ and moreover $\tr_{\{0\}} g=0$.

Hence we have \eqref{inteq} almost everywhere. The rest of the proof of Lemma \ref{HardyZerleger} (for $r=1$) is a matter of $L_p(\R^{n-l})$-integration - as long as $f$ and $D^{\alpha} f$ belong to $L_p(\R^n)$, there are no further problems to cure.

For $r>1$ we made use of an induction argument. Hence we require not only $f \in \FR$ with $s=1+\frac{n-l}{p}$ and $\tr_l f=0$ to have \eqref{inteq}, but the same for the derivatives $D^{\alpha} f$ of $f$ with $\alpha \in \N_l^n$ upto order $|\alpha|\leq r-1$. But this is satisfied, if we assume $f \in \FR$ with $s=r+\frac{n-l}{p}$ and $\tr_l^r f=0$. Thus we have 
\end{Remark}
\begin{Corollary}
\label{HardyZerlegerC}
 Let $n \in \N$, $l \in \N_0$, $l<n$ and $r \in \N$. Let $1\leq p <\infty$, $d(x)$ be the distance of $x=(x',x'')\in \R^l \times \R^{n-l}$ from $\R^l$ and let $s=r+\frac{n-l}{p}$.
Then there is a constant $c>0$ such that 
\begin{align*}
 \|d^{-s}(\cdot) f|L_p(\R^n)\| \leq c \sum_{\underset{|\alpha|=r}{\alpha \in \N_l^n}} \|d^{-s+r}(\cdot) D^{\alpha}f|L_p(\R^n)\|
\end{align*}
for all $f \in \FR$ with $\tr_l^{r-1} f = 0$.

\end{Corollary}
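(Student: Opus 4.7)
The plan is to extend Lemma \ref{HardyZerleger} from smooth functions with pointwise vanishing derivatives to the Triebel-Lizorkin setting at the critical index $s=r+\frac{n-l}{p}$, keeping the overall strategy of reducing to a one-dimensional Hardy inequality along rays perpendicular to $\R^l$, but replacing pointwise regularity by Fubini slicing and mollification. I would proceed by induction on $r$. The inductive step is short: given $f\in\FR$ with $s=r+\frac{n-l}{p}$ and $\tr_l^{r-1}f=0$, applying the $r=1$ version of the corollary to $f$ (whose hypothesis $\tr_l f=0$ is part of the assumption) yields
\begin{align*}
\|d^{-s}(\cdot) f|L_p(\R^n)\| \lesssim \sum_{\underset{|\alpha|=1}{\alpha\in\N_l^n}} \|d^{-s+1}(\cdot) D^{\alpha}f|L_p(\R^n)\|.
\end{align*}
Each $D^{\alpha} f$ lies in $F_{p,q}^{s-1}(\R^n)$ with $s-1=(r-1)+\frac{n-l}{p}$ and satisfies $\tr_l^{r-2}(D^{\alpha} f)=0$; applying the induction hypothesis to each such $D^{\alpha} f$ and summing completes the estimate.

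For the $r=1$ case, let $f\in\FR$ with $s=1+\frac{n-l}{p}$ and $\tr_l f=0$. By the Fubini property (Proposition \ref{Fubini}, with the role of $l$ and $n-l$ interchanged) the slice $h_{x'}(x''):=f(x',x'')$ lies in $F_{p,q}^{1+\frac{n-l}{p}}(\R^{n-l})$ for a.e.\ $x'\in\R^l$. For such $x'$ and for a.e.\ direction $\omega\in S^{n-l-1}$, the trace of $h_{x'}$ onto the line $\R\omega\subset\R^{n-l}$ exists (the relevant trace smoothness $s-\frac{n-l-1}{p}=1+\frac{1}{p}$ is positive) and belongs to $\Fpp{1+\frac{1}{p}}{\R}$, so that the radial profile $g(t):=h_{x'}(t\omega)$ lies in $\Fpp{1+\frac{1}{p}}{\R^+}$; since $1+\frac{1}{p}>\frac{1}{p}$, the pointwise trace $\tr_{\{0\}} g$ is well defined and equals $0$ by iteration of the trace operator starting from $\tr_l f=0$.

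The key technical point is the identity
\begin{align*}
g(t)=\int_0^t g'(u)\,du \quad \text{for a.e.\ } t>0,
\end{align*}
which I would establish by first extending $g$ to a function in $\Fpp{1+\frac{1}{p}}{\R}$ and mollifying with a Schwartz approximate identity: since $1+\frac{1}{p}>1$, the approximants $g_j$ satisfy $g_j\to g$ and $g_j'\to g'$ in $L_p(\R)$ (so pointwise a.e.\ along a subsequence) while continuity of the trace forces $\tr_{\{0\}} g_j\to 0$; passing to the limit in the identity written for each $g_j\in\Sc(\R)$ then yields the claim a.e. Once the identity is available, the one-dimensional weighted Hardy inequality employed in Lemma \ref{HardyZerleger} gives a pointwise estimate for each fixed $x'$ and $\omega$; integrating in $(n-l)$-dimensional spherical coordinates and then over $x'\in\R^l$, exactly as in the proof of Lemma \ref{HardyZerleger}, proves the $r=1$ case. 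The main obstacle is precisely the transfer of the integral identity to the distributional setting at critical smoothness; it is overcome by the combination of Fubini, trace continuity, and mollification, after which the remaining ingredients (spherical integration, the one-dimensional Hardy inequality, and the outer induction) are routine.
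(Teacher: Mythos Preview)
Your approach is essentially the paper's: the Corollary is argued in the Remark immediately preceding it by exactly the route you describe --- Fubini slicing to $\R^{n-l}$, trace onto radial lines to land in $F_{p,p}^{1+1/p}(\R^+)$, mollification together with trace continuity to justify $g(t)=\int_0^t g'(u)\,du$ almost everywhere, then the one-dimensional weighted Hardy inequality, spherical integration, and induction on $r$.

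One imprecision in your inductive step: literally ``the $r=1$ version of the corollary'' has $s=1+\frac{n-l}{p}$, so applying it to $f\in F_{p,q}^{r+\frac{n-l}{p}}(\R^n)$ via embedding yields a bound with weight $d^{-(1+\frac{n-l}{p})}$, not the displayed inequality with $d^{-s}$ for $s=r+\frac{n-l}{p}$. What you actually need (and what your own $r=1$ argument proves verbatim for any $s>\frac{n-l}{p}$, since the one-dimensional Hardy inequality \eqref{Hardyone} only requires $p>\alpha+1$) is the one-step bound under the sole regularity hypothesis that the radial slice lies in $F_{p,p}^{1+1/p}(\R^+)$ with vanishing trace at $0$; this is guaranteed because $f$ has smoothness at least $1+\frac{n-l}{p}$. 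The paper sidesteps this by organizing the induction inside Lemma \ref{HardyZerleger}: it observes that the integral identity is the only place smoothness is used there, checks that the identity holds for $f$ and all perpendicular derivatives $D^\alpha f$ with $|\alpha|\le r-1$, and then runs the lemma's proof unchanged.
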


\subsubsection{The decomposition theorem for the non-critical cases}
Now we come to the two main theorems of the article which pave the way to the wavelet decomposition in the non-critical and critical cases for $\Frinf[Q]$ on the cube  $Q$. Originally, Triebel proved the wavelet decomposition in \cite[Theorem 6.30]{Tri08} already for the non-critical cases, so only the critical cases are left. But, since our notation and approach is slightly different, we also give a derivation for the non-critical cases which is different from Triebel's proof.

\begin{Theorem}[The non-critical cases]
\label{Zerleger}
 Let $1 \leq p<\infty$ and $1\leq q< \infty$. Let $n \in \N$, $l \in \N_0$ and $l<n$. Let $s>0$,
\begin{align*}
 \quad s-\frac{n-l}{p} \notin \N_0 \text{ and } 
 r= \lfloor{s-\frac{n-l}{p}\rfloor}.
\end{align*}
If $r \in \N_0$, then
\begin{align}
\label{decomp1}
 \Frloc[\hyp]= \left\{f \in \Frinf[\hyp]: \tr_l^{r} f=0\right\}.
\end{align}
If $r=-1$ (hence $s<\frac{n-l}{p}$), then
\begin{align}
\label{decomp2}
 \Frloc[\hyp]=\Frinf[\hyp].
\end{align}
\end{Theorem}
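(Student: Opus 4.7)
My plan is to prove the two inclusions separately, using Proposition \ref{rlocequi} as the main technical bridge: membership in $\Frloc[\hyp]$ is equivalent to membership in $\FO[\hyp]$ together with $\|\delta^{-s} f\,|\,L_p(\hyp)\|<\infty$.

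For the inclusion ``$\subset$'' (in the case $r\ge 0$) I would first note that $\Frloc[\hyp] \hookrightarrow \FR = \Frinf[\hyp]$, which follows either from the first step of the proof of Proposition \ref{rlocequi} or directly from the wavelet synthesis in Theorem \ref{rlocwavelet}. By Remark \ref{rlocDense}, $D(\hyp)$ is dense in $\Frloc[\hyp]$ (using $1\le p,q<\infty$). Every $\varphi\in D(\hyp)$ vanishes near $\R^l$, hence has $\tr_l^r\varphi=0$; the trace operator is continuous on $\FR$ by Proposition \ref{Tra} since the non-critical assumption gives $s>r+\frac{n-l}{p}$, so passing to limits yields $\tr_l^r f=0$ for every $f\in\Frloc[\hyp]$. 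In the case $r=-1$ no trace condition is imposed and ``$\subset$'' reduces to $\Frloc\hookrightarrow\FR$.

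For the reverse inclusion in the case $r=-1$ (i.e.\ $0<s<\frac{n-l}{p}$), I would take $f\in\Frinf[\hyp]=\FR$ and apply Proposition \ref{Hardysubcrit} with $\varkappa\equiv 1$ to obtain $\int_{(\hyp)_\eps}|f|^p d^{-sp}\,dx\lesssim\|f\,|\,\FR\|^p$; combined with the embedding $\FR\hookrightarrow L_p(\R^n)$ on $\{d\ge\eps\}$ this gives $\|\delta^{-s}f\,|\,L_p(\hyp)\|<\infty$, whence $f\in\Frloc[\hyp]$ via Proposition \ref{rlocequi}. For the harder case $r\in\N_0$, take $f\in\FR$ with $\tr_l^r f=0$ and multiply by a cutoff $\chi(x)=\psi(|x''|)$ with $\psi\equiv 1$ on $[0,\tfrac12]$ and $\psi\equiv 0$ on $[1,\infty)$. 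Then $\chi f\in\FR$ by pointwise multipliers, $\tr_l^r(\chi f)=0$, and $|\delta^{-s}f|$ is controlled by $d^{-s}|\chi f|$ on $\{d<\tfrac12\}$ plus $\|f\,|\,L_p\|$ on $\{d\ge\tfrac12\}$. I would then apply Lemma \ref{HardyZerleger} with order parameter $r+1$ (the hypothesis $s>r+\frac{n-l}{p}$ holds by non-criticality) to get
\[
\|d^{-s}(\chi f)\,|\,L_p(\R^n)\|\lesssim\sum_{\underset{|\alpha|=r+1}{\alpha\in\N_l^n}}\|d^{-(s-r-1)}D^{\alpha}(\chi f)\,|\,L_p(\R^n)\|.
\]
Leibniz' rule reduces the right-hand side to terms $\chi D^\alpha f$ plus terms involving derivatives of $\chi$, which live in $\{d\sim 1\}$ and are harmless. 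For the main terms, $D^\alpha f\in\FR[s-r-1]$ with $s-r-1\in\bigl(\frac{n-l}{p}-1,\frac{n-l}{p}\bigr)$: if $s-r-1>0$, apply Proposition \ref{Hardysubcrit} to $D^\alpha f$; if $s-r-1\le 0$, the weight $d^{-(s-r-1)}$ is bounded on the support of $\chi$ and the $L_p$-embedding of $\FR[s-r-1]$ suffices. Summing yields $\|\delta^{-s}f\,|\,L_p(\hyp)\|\lesssim\|f\,|\,\FR\|$, hence $f\in\Frloc[\hyp]$ by Proposition \ref{rlocequi}.

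The main obstacle is that Lemma \ref{HardyZerleger} is stated only for $f\in C^{r+1}(\R^n)$, whereas our $\chi f$ merely lies in $\FR$. To bridge this gap I would adapt the approximation argument sketched in the remark preceding Corollary \ref{HardyZerlegerC}: mollify $\chi f$ by a Schwartz convolution sequence, observe that the smoothed functions converge to $\chi f$ in $\FR$ so that their traces up to order $r$ tend to $0$ in $\prod_{|\alpha|\le r}\Fpp{s-\frac{n-l}{p}-|\alpha|}{\R^l}$ by Proposition \ref{Tra}, while the derivatives $D^\alpha$ of the smoothings converge to $D^\alpha(\chi f)$ in $L_p(\R^n)$; one then passes Lemma \ref{HardyZerleger} to the limit via the pointwise integration identity $g(t)=\int_0^t g'(u)\,du$ along rays perpendicular to $\R^l$, as in the cited remark. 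The remaining bookkeeping (product rule, localization, and the sub-/supercritical split) is then routine.
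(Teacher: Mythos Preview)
Your first step (the inclusion $\Frloc[\hyp]\subset\{\tr_l^r f=0\}$) and your treatment of the case $r=-1$ coincide with the paper's proof. For the harder inclusion with $r\in\N_0$, however, you take a different route than the paper: you try to reach the weighted $L_p$-bound directly by applying Lemma~\ref{HardyZerleger} with order $r+1$ and then closing with Proposition~\ref{Hardysubcrit}, whereas the paper reduces via the Fubini property to $\R^{n-l}$, then by spherical coordinates to a one-dimensional trace problem, and finally invokes the known identity $\tilde F_{p,p}^{s'}([0,2])=F_{p,p}^{s',\rloc}([0,2])$ from Proposition~\ref{inftydom}.

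There is a genuine gap in your argument in the parameter range $s<r+1$, which occurs precisely when $p>n-l$ and $s-\tfrac{n-l}{p}$ lies in $\bigl(r,\,r+1-\tfrac{n-l}{p}\bigr)$; for instance $n=2$, $l=1$, $p=q=2$, $s=0.8$, $r=0$. In this range your right-hand side carries derivatives $D^\alpha(\chi f)$ with $|\alpha|=r+1$ and hence $D^\alpha(\chi f)\in F_{p,q}^{s-r-1}(\R^n)$ with $s-r-1<0$. Your sentence ``the $L_p$-embedding of $F_{p,q}^{s-r-1}(\R^n)$ suffices'' is then simply false: $F_{p,q}^{\sigma}(\R^n)$ does not embed into $L_p(\R^n)$ for $\sigma<0$, so the right-hand side of the Hardy inequality need not be finite, let alone controlled by $\|f\,|\,\FR\|$. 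The mollification scheme does not rescue this either, because the step ``$D^\alpha$ of the smoothings converge to $D^\alpha(\chi f)$ in $L_p(\R^n)$'' already presupposes $D^\alpha(\chi f)\in L_p$. In short, pushing Lemma~\ref{HardyZerleger} to order $r+1$ over-differentiates.

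The paper's reduction avoids this: after passing to one dimension the relevant exponent is $s'=s-\tfrac{n-l-1}{p}>\tfrac1p$, and Proposition~\ref{inftydom} delivers the weighted estimate without ever leaving positive-smoothness spaces. If you want to keep your strategy, you must stop the Hardy iteration at order $r$ and then supply a separate argument for the remaining estimate $\|d^{-(s-r)}D^\alpha f\,|\,L_p\|\lesssim\|D^\alpha f\,|\,F_{p,q}^{s-r}(\R^n)\|$ under $\tr_l D^\alpha f=0$ with $s-r\in(\tfrac{n-l}{p},\tfrac{n-l}{p}+1)$; but this is exactly the $r=0$ case of the theorem, and Proposition~\ref{Hardysubcrit} does not apply since $s-r>\tfrac{n-l}{p}$.
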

\begin{proof}

 \textit{First step:} We show that $\Frloc[\hyp]$ is contained in the RHS of \eqref{decomp1} resp.\ \eqref{decomp2}. At first, if $f \in \Frloc$, then $f$ has a wavelet decomposition by Theorem \ref{rlocwavelet} and hence belongs to $\FR$ with
\begin{align}
\label{rlocest}
 \|f|\Frinf[\hyp]\|=\|f|\FR\| \lesssim \|f|\Frloc[\hyp]\|
\end{align}
by the atomic representation Theorem \cite[Theorem 3.12]{Sch13a}.

Furthermore, using \eqref{rlocest} and Remark \ref{rlocDense}, which states that $D(\hyp)$ is dense in $\Frloc[\hyp]$, we find a sequence $\{g_j\}_{j \in \N} \subset D(\hyp)$ with
\begin{align*}
 g_j \rightarrow f \text{ in } \FR
\end{align*}
for every $f \in \Frloc[\hyp]$. Hence by the continuity of the trace operator
\begin{align*}
 0=\tr_l(D^{\alpha}g_j) \rightarrow \tr_l(D^{\alpha}f) \text{ in } F_{p,p}^{s-\frac{n-l}{p}-|\alpha|}(\R^l).
\end{align*}
\textit{Second step:} We show that the RHS is contained in $\Frloc[\hyp]$. For $r=-1$, thus $0<s<\frac{n-l}{p}$, this follows from the Hardy inequalities for the subcritical case, see Proposition \ref{Hardysubcrit}, and the equivalent characterization of $\Frloc[\hyp]$ in Proposition \ref{rlocequi}.  

For the other cases ($r \in \N$, i.\,e.\ $s>\frac{n-l}{p}$) we want to give a proof using a dimension-fixing argument very similar to the proof of Lemma \ref{HardyZerleger}. 

Let $f \in \Frinf[\hyp]=\FR$ with $\tr_l^r f=0$. Let $x=(x',x'') \in \R^l \times \R^{n-l}$. We fix $x'$ and consider $g_{x'}(x'')=f(x',x'')$ as a function mapping from $\R^{n-l}$. By the Fubini property \ref{Fubini} of $\FR$ we get
\begin{align}
\label{fubint}
 \int_{x'\in \R^l} \|g(x',\cdot)|\FO[\R^{n-l}]\|^p \ dx' \lesssim \|f|\FR\|^p
\end{align}
and at least $g_{x'} \in \FO[\R^{n-l}]$ almost everywhere. Furthermore, by $\tr_l^r f=0$ we get $\tr_{\{x''=0\}} D^{\alpha} g_{x'} = 0$ for $\alpha \in \N_l^n$ with $|\alpha|\leq r$ for all $x'$ with $g_{x'} \in \FO[\R^{n-l}]$.

We now have simplified the situation: We look at a function $g \in \FO[\R^{n-l}]$ with traces at the point $x''=0$ instead of traces at an $l$-dimensional plane. If we show our theorem for this special situation, this means if we show
\begin{align}
\label{Hardypoint}
 \|d^{-s}(\cdot) g|L_p((\R^{n-l}\setminus \{0\})_{\eps})\| \lesssim \|g|\FO[\R^{n-l}]\|
\end{align}
for $g$ with $\tr_{\{0\}}^r g = 0$, then by integrating this estimate over $\R^l$ and using \eqref{fubint}, we get the desired inequality
\begin{align*}
 \|d^{-s}(\cdot) f|L_p((\hyp)_{\eps})\| \lesssim \|f|\FR\|.
\end{align*}
So, let's assume $f \in \FO[\R^{n-l}]$ and $\tr_{\{0\}}^r f =0$. Using $(n-l)$-dimensional spherical coordinates similar to \eqref{spherecord} we have
\begin{align*}
 \int_{x \in \R^n} |x|^{-sp} \cdot |f(x)|^p \ dx  = \int_B \tau(y) \int_{0}^{\infty} t^{n-l-1-sp} |f(ty)|^p \ dt \ dy,
\end{align*}
where $B:=\{y \in \R^{n-l}: |y|=1\}$ and $\tau$ is a positive function depending only on the angle $y$ of $x$ but which is independent of the absolute value $t$ of $x$.

Thus it suffices to prove
\begin{align*}
 \int_0^{\infty} t^{-(s-\frac{n-l-1}{p})p} |f(ty)|^p \ dt  \lesssim \|f|\FO[\R^{n-l}]\|.
\end{align*}
But again, this can be proven using a very special situation of our theorem, already known: If $f \in \FO[\R^{n-l}]$, then the function
\begin{align*}
 f_y: \R^+ \rightarrow \C: t \mapsto f(ty)  
\end{align*}
for $y \in B$ belongs to $F_{p,p}^{s-\frac{n-l-1}{p}}(\R^+)$ and it holds
\begin{align}
\label{traceinequi}
 \|f_y|F_{p,p}^{s-\frac{n-l-1}{p}}(\R^+)\| \leq c \ \|f|\FO[\R^{n-l}]\|
\end{align}
with a constant $c$ independent of $y \in B$: For $y=(1,0,\ldots,0)$ this follows from the trace theorem Proposition \ref{Tra}. The independency from $y \in B$ is a consequence of the rotational invariance of $\FR$. Furthermore, if $\tr_{\{0\}}^r f=0$, then $\tr_{\{0\}}^{r} f_y=0$.

Let now $f_y \in F_{p,p}^{s'}(\R^+)$ with $\tr_{\{0\}}^{r} f_y=0$ (all possible traces) and $s'=s-\frac{n-l-1}{p}$. Let $\psi \in D(\R^{+})$ be a non-negative function with $\psi(x)=1$ for $0<x\leq 1$ and $\psi(x)=0$ for $|x|\geq 2$. Then $g_y=\psi \cdot f_y \in F_{p,p}^{s'}([0,2])$ with $\tr_{\{0\}}^{r} g_y=\tr_{\{2\}}^{r} g_y=0$. 

Now we are in a one-dimensional situation. By our assumption it holds
\begin{align*}
 s-\frac{n-l}{p} \notin \N_0 \Rightarrow s'-\frac{1}{p} \notin \N_0.
\end{align*}
 By Proposition \ref{inftydom} we have $g_y \in \tilde{F\,}\!_{p,p}^{s'}([0,2])$ and by the observations in Remark \ref{Rem:Ethick} thus $g_y \in F_{p,p}^{s',\rloc}([0,2])$ with equivalent norms. Using the equivalent characterization of $F_{p,p}^{s',\rloc}([0,2])$ in Proposition \ref{rlocequi} and $\dist(t,\partial([0,2]))=t$ for $t \in (0,1)$ result in
\begin{align*}
 \int_{0}^{1} t^{-s'p} |g_y(t)|^p \ dt \lesssim \|g_y|F_{p,p}^{s',\rloc}([0,2])\| \sim \|g_y|\tilde{F\,}\!_{p,p}^{s'}([0,2])\| \sim \|g_y|F_{p,p}^{s'}([0,2])\|.
\end{align*}
This together with \eqref{traceinequi} and a pointwise multiplier argument lead to
\begin{align*}
\int_{0}^{\infty} &t^{-(s-\frac{n-l-1}{p})p} |f(ty)|^p \ dt = \int_{0}^{1} t^{-s'p} |f(ty)|^p \ dt + \int_{1}^{\infty} t^{-s'p} |f(ty)|^p \ dt \\
&\lesssim \|g_y|F_{p,p}^{s'}([0,2])\|+\|f_y|L_p(\R^+)\| \lesssim \|f_y|F_{p,p}^{s'}(\R^+)\| \lesssim \|f|\FO[\R^{n-l}]\|.
\end{align*}
\end{proof}

\subsubsection{The decomposition theorem for the critical cases}

\begin{Theorem}[The critical cases]
\label{Zerlegercrit}
 Let $1 \leq p<\infty$ and $1 \leq q< \infty$. Let $n \in \N$, $l \in \N_0$ and $l<n$. Let $s>0$ and
\begin{align*}
 r= s-\frac{n-l}{p} \in \N_0.
\end{align*}
If $r \in \N$, then
\begin{align*}
 \Frloc[\hyp]= \left\{f \in \Frinf[\hyp]: \tr_l^{r-1} f=0\right\}.
\end{align*}
If $r=0$ (hence $s=\frac{n-l}{p}$), then
\begin{align*}
 \Frloc[\hyp]=\Frinf[\hyp].
\end{align*}
\end{Theorem}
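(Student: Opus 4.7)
The plan is to mimic the two-step proof of the non-critical Theorem \ref{Zerleger}, replacing Proposition \ref{Hardysubcrit} (which breaks down on the borderline $s = r + \tfrac{n-l}{p}$) by the freshly proven Corollary \ref{HardyZerlegerC}, which is adapted exactly to this critical balance. In both directions the link to $\Frloc[\hyp]$ is the alternative characterization in Proposition \ref{rlocequi}, expressing membership as membership in $\FO[\hyp]$ together with a weighted $L_p$-decay condition near $\R^l$.

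For $\Frloc[\hyp]$ contained in the right-hand side I would first observe that Theorem \ref{rlocwavelet} and atomic decomposition produce $f \in \FR$. If $r \geq 1$, Proposition \ref{rlocDiff} applied to each $\alpha \in \N_l^n$ with $|\alpha| = r$ yields $D^\alpha f \in F_{p,q}^{(n-l)/p,\rloc}(\hyp)$, and since $(n-l)/p > \sigma_{p,q} = 0$ under our assumption $p, q \geq 1$, Proposition \ref{rlocequi} gives $\|\delta^{-(n-l)/p} D^\alpha f | L_p(\hyp)\| < \infty$, which is exactly the reinforcement clause of Definition \ref{reinforcedhyp}; the $r = 0$ case is the same statement with $\alpha = 0$. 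The vanishing of the admissible traces $\tr_l^{r-1} f$ then follows from the density of $D(\hyp)$ in $\Frloc[\hyp]$ (Remark \ref{rlocDense}) together with the continuity of the trace operator in Proposition \ref{Tra}, which is available for $|\alpha| \leq r - 1$ because in that range $s - |\alpha| > \tfrac{n-l}{p}$.

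For the converse inclusion, the case $r = 0$ is immediate: $f \in \Frinf[\hyp]$ gives $f \in \FR$ and $\|d^{-(n-l)/p} f | L_p((\hyp)_\eps)\| < \infty$, and combined with $f \in L_p(\R^n)$ this yields $\|\delta^{-(n-l)/p} f | L_p(\hyp)\| < \infty$, so Proposition \ref{rlocequi} closes the loop. The substantive case is $r \in \N$: given $f \in \Frinf[\hyp]$ with $\tr_l^{r-1} f = 0$, I would apply Corollary \ref{HardyZerlegerC}, whose hypotheses $s = r + \tfrac{n-l}{p}$ and $\tr_l^{r-1} f = 0$ match our assumptions exactly, to obtain
\begin{align*}
\|d^{-s} f | L_p(\R^n)\| \lesssim \sum_{\substack{\alpha \in \N_l^n \\ |\alpha| = r}} \|d^{-(n-l)/p} D^\alpha f | L_p(\R^n)\|.
\end{align*}
The right-hand side is finite because its $(\hyp)_\eps$-part is controlled by the reinforcement term in $\|f | \Frinf[\hyp]\|$, while on $\{d \geq \eps\}$ the weight is bounded and $D^\alpha f \in F_{p,q}^{(n-l)/p}(\R^n) \hookrightarrow L_p(\R^n)$. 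Switching from $d$ to $\delta$ and invoking Proposition \ref{rlocequi} once more yields $f \in \Frloc[\hyp]$. The main obstacle has in fact already been absorbed into Corollary \ref{HardyZerlegerC}: at the critical smoothness $(n-l)/p$ standard weighted-trace estimates pick up logarithmic losses (cf.\ Proposition \ref{Hardycrit}), and it is precisely the assumption $\tr_l^{r-1} f = 0$ that supplies the cancellation needed to dispose of those logarithms and keep the resulting decomposition theorem structurally identical to its non-critical counterpart.
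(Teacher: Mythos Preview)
Your proposal is correct and follows essentially the same route as the paper: both directions hinge on Proposition~\ref{rlocequi}, the forward inclusion uses Proposition~\ref{rlocDiff} to push $D^\alpha f$ into $F_{p,q}^{(n-l)/p,\rloc}$ and density of $D(\hyp)$ for the trace vanishing, and the reverse inclusion is handled by Corollary~\ref{HardyZerlegerC} for $r\geq 1$ and directly by the definition for $r=0$. Your write-up is in fact slightly more explicit than the paper's in justifying why the right-hand side of the Hardy inequality is finite (splitting into $(\hyp)_\eps$ and its complement), but the argument is the same.
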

\begin{proof}
 \textit{First step:} We show, that $\Frloc[\hyp]$ is contained in the RHS. As in the first step of the proof of Theorem \ref{Zerleger}, if $f \in \Frloc[\hyp]$, then $f \in \FR$ and $\tr_l^{r-1} f =0$. 

Furthermore, by Proposition \ref{rlocDiff} it holds $D^{\alpha} f \in F_{p,q}^{\frac{n-l}{p},\rloc}(\hyp)$ for $\alpha \in \N_l^n$ with $|\alpha|=r$. Hence, by Proposition \ref{rlocequi} we have $\delta^{-\frac{n-l}{p}}(\cdot) D^{\alpha}f \in L_p(\R^n)$. Since $\delta(x) = d(x)$ for $d(x) \leq 1$, it follows $f \in \Frinf[\hyp]$.

\textit{Second step:} To show, that the RHS is contained in $\Frloc[\hyp]$ we use the equivalent characterization of $\Frloc[\hyp]$ from Proposition \ref{rlocequi}. Hence we have to prove that 
\begin{align}
\label{HardyFrinf}
\|d^{-s}(\cdot) f|L_p((\hyp)_{\eps})\| \lesssim \|f|\Frinf[\hyp]\|
 \end{align}
for all $f \in \Frinf[\hyp]$ with $\tr_l^{r-1} f=0$. If $r=0$, hence $s=\frac{n-l}{p}$ - estimate \eqref{HardyFrinf} is a direct consequence of the definition of $\Frinf[\hyp]$.

If $r>0$, then we make use of the Hardy inequality from Corollary \ref{HardyZerlegerC}: By definition of $\Frinf[\hyp]$ and $\FR \subset L_p(\R^n)$ we have $d^{-s+r} (\cdot) D^{\alpha} f \in L_p(\R^n)$ for $\alpha \in \N_l^n$ with $|\alpha|=r$. We get
\begin{align*}
 \|d^{-s}(\cdot) f|L_p(\R^n)\| \lesssim \sum_{\underset{|\alpha|=r}{\alpha \in \N_l^n}} \|d^{-s+r}(\cdot) D^{\alpha}f|L_p(\R^n)\|
\end{align*}
and $f$ belongs to $\Frloc[\hyp]= \{f \in \FR: d^{-s}(\cdot) f \in L_p((\hyp)_{\eps})\}$.
\end{proof}

\begin{Remark}
\label{ZerlegerRem}
 We want to give some remarks on the validity of Theorems \ref{Zerleger} and \ref{Zerlegercrit} if $0<q<1$:

In the non-critical cases investigated in Theorem \ref{Zerleger} the proof only makes use of $s>\sigma_{p,q}$ - then $\Frloc[\hyp]$ is defined, the Fubini property \ref{Fubini} holds and atoms do not need moment conditions.  

In the critical cases from Theorem \ref{Zerlegercrit} we used $D^{\alpha} f \in F_{p,q}^{\frac{n-l}{p},\rloc}(\hyp)$ for $\alpha \in \N_l^n$ with $|\alpha|=r$. But then naturally we have to assume $s-r=\frac{n-l}{p}> \sigma_{p,q}=\sigma_{p,q}$ by the parameters in the definition of $\Frloc$. 

But one can circumvent the direct use of $F_{p,q}^{\frac{n-l}{p},\rloc}(\hyp)$ such that it suffices to assume $s>\sigma_{p,q}$: If $f \in \Frloc[\hyp]$ with $s-r=\frac{n-l}{p}$, then by Theorem \ref{rlocwavelet} we have a wavelet decomposition of $f$ with a certain structure at the boundary $\R^l$. As in the proof of Corollary \ref{rlocequi} this gives 
\begin{align*}
 \|\delta^{-\frac{n-l}{p}}(\cdot)D^{\alpha} f|L_p(\hyp)\|  \lesssim \|f|\Frloc[\hyp]\|.
\end{align*}
For the second step it suffices to assume $s=r+\frac{n-l}{p}>\sigma_{p,q}$. Putting everything together, we can extend Theorems \ref{Zerleger} and \ref{Zerlegercrit} to $q<1$ assuming $s>\sigma_{p,q}$.  
\end{Remark}
 
\section{Outlook}
Right now Theorems \ref{Zerleger} and \ref{Zerlegercrit} seem to be relatively theoretical constructs. The important aspect is to see the similarity to the observations in \cite[Section 6.1.4]{Tri08}. Using the decomposition theorems we know that $f \in \Frinf[\hyp]$ belongs to the refined localization space $\Frloc[\hyp]$ if all existing traces on $\R^l$ are vanishing. The spaces $\Frloc[\hyp]$ have (interior) wavelet (Riesz) bases, see Theorem\ref{rlocwavelet} or the original source \cite[Theorem 2.38]{Tri08}. 

If we have an arbitrary $f \in \Frinf[\hyp]$ whose traces are not vanishing we use the technique described in \cite[Theorem 6.23]{Tri08}. We cut $f$ into
\begin{align}
\label{decompo}
f=f_1+f_2 = \left( f - (\Ext_l^{r,u} \circ\tr_l^{r}) f \right) + (\Ext_l^{r,u} \circ\tr_l^{r}) f, 
\end{align}
where $\Ext_l^{r,u}$ is the wavelet-friendly extension operator introduced in \cite[Section 6.1.3]{Tri08}. Then by construction $\tr_l^r f_1=0$ and hence $f_1$ belongs to $\Frloc[\hyp]$ by our decomposition theorems. On the other hand, $f_2$ is an extension of an element of the trace space on $\R^l$ which also admits a wavelet decomposition. Using the wavelet-friendly extension operator $\Ext_l^{r,u}$  one can extend the wavelet functions of the wavelet basis of the trace space on $\R^l$ to wavelet functions on $\R^n$. Putting both wavelet decompositions together one can decompose $f$ into wavelet-like functions and hence find a Riesz basis which is an oscillating wavelet system as Triebel defined it in \cite[Definition 2.4]{Tri08} resp.\ \cite[Definition 5.5]{Tri08}.

One can transfer this idea from reinforced Triebel-Lizorkin function spaces on $\hyp$ to reinforced Triebel-Lizorkin function spaces on the cube $Q$. Essentially one now has to consider every boundary of dimension $0$ to $n-1$ on its own, starting with dimension $0$ and caring about the traces at the boundaries using the decomposition technique \eqref{decompo} from low to high dimension. This is done in Chapter 4 of thesis \cite{Sch13} and will also be published in the future.

The most prominent exceptional space is the classical Sobolev space $W_2^1(Q)$ for the $n$-dimensional cube and $n\geq 2$ since $s-\frac{2}{p}=0$. Upto now there seems to be no wavelet (Riesz) bases in the sense of Triebel's definition for this space. With the results from Chapter 4 of thesis \cite{Sch13} we are able to show that at least a reasonable subspace of $W_2^1(Q)$, the reinforced Sobolev space $W_2^{1,\rinf}(Q)$, has a wavelet representation in this sense.



\bibliographystyle{abbrv}
\bibliography{ben}

\end{document}